\DeclareMathAlphabet\mathbfcal{OMS}{cmsy}{b}{n}
\begin{document}
%GOOD FOR EXP SUP. MAYBE NOT FOR CHAPITRE 3. 
%	\setlength\parindent{0pt}
	\newcommand{\V}{\ensuremath{\mathcal{V}}} %probabily doesn't exist in chpt 3
	\newcommand{\sV}{\ensuremath{\mathbfcal{V}} } %in chpt 3 it's probably \V 
	\newcommand{\Pbold}{\ensuremath{\mathbfcal{P}} }
	\newcommand{\PI}{\ensuremath{\mathbf{\Pi}} }
	\newcommand{\svec}{\ensuremath{\textbf{$\mathfrak{svec}$}} }
	\newcommand{\dg}[1]{\ensuremath{\overline{#1} \:}}
	\newcommand{\Zdz}{\ensuremath{\mathbb{Z}/2\mathbb{Z}} }
	\newcommand{\smod}{\ensuremath{\mathsf{smod}}}
	\newcommand{\modd}{\ensuremath{\mathsf{mod}}}
	\newcommand{\rbold}{\textbf{r} }
	\newcommand{\Bbold}{\textbf{B}}
	\newcommand{\Rbold}{\textbf{R}}
	\newcommand{\Ebold}[1]{\boldsymbol{E_{#1}}}
	\newcommand{\Sbold}{\textbf{S} }
	\newcommand{\Abold}{\textbf{A} }
	\newcommand{\Ibold}{\textbf{I} }
	\newcommand{\Image}{\mathrm{Im}}
	\newcommand{\sdim}{\mathrm{sdim}}
	\newcommand{\Ker}{\mathrm{Ker}}
	\newcommand{\Gammabold}{\boldsymbol{\Gamma} } 
	\newcommand{\Lambdabold}{\boldsymbol{\Lambda} } 
	\newcommand{\Omegabold}{\boldsymbol{\Omega} }
	\newcommand{\kbold}{\textbf{k} }
	\newcommand{\Tbold}{\textbf{T} }
	\newcommand{\Qbold}[1]{\ensuremath{\textbf{Q}^{#1}}}
	\newcommand{\sym}{\mathfrak{s}}
	\newcommand{\kmn}{\ensuremath{\kk^{m|n}} }
	\newcommand{\smn}[1]{\ensuremath{S(m|n, {#1})}}
	\newcommand{\sn}[1]{\ensuremath{S(n, {#1})}}
	\newcommand{\kuhn}{^{\#} }
	
	\newcommand{\ciolino}{\textbf{PROVA...}}
	
	\newcommand{\ev}{\mathrm{ev}}
	\newcommand{\Ext}{\mathrm{Ext}}
	\newcommand{\Hom}{\mathrm{Hom}}
	\newcommand{\End}{\mathrm{End}}
	\newcommand{\Ccal}{\mathcal{C}}
	\newcommand{\Pcal}{\mathcal{P}}
	\newcommand{\A}{\mathcal{A}}
	\newcommand{\B}{\mathcal{B}}
	\newcommand{\D}{\mathcal{D}}
	\newcommand{\kk}{\Bbbk}
	\newcommand{\id}{\mathrm{id}}
	\newcommand{\Id}{\mathrm{Id}}
	\newcommand{\Tot}{\mathrm{Tot}}

	\numberwithin{equation}{section}
	\theoremstyle{plain}
	\newtheorem{thm}{Theorem}[section]
	\newtheorem{lemma}[thm]{Lemma}
	\newtheorem{prop}[thm]{Proposition} 
	\newtheorem{cor}[thm]{Corollary}
	\theoremstyle{definition}
	\newtheorem{defin}[thm]{Definition} 
	\theoremstyle{remark}
	\newtheorem{rmk}[thm]{Remark} 
	\newtheorem{ex}[thm]{Example}
	\newtheorem{conj}[thm]{Conjecture}
	\newtheorem{prop-defi}[thm]{Proposition-Definition}
	\newtheorem{convention}[thm]{Convention}
	\newtheorem{notation}[thm]{Notation}
	\newtheorem{pb}[thm]{Problem}
	\DeclarePairedDelimiter\floor{\lfloor}{\rfloor}

\title{Cohomology of twisted polynomial superfunctors through the twisting spectral sequence}

\author{Iacopo Giordano}

\begin{abstract}
	We want to compute generic $\Ext$-spaces of twisted polynomial functors in relation to the $\Ext$-spaces of the untwisted ones, modulo a parametrisation. Thanks to the study of a spectral sequence we get to a computation in low degrees, with remarkable consequences at the level of generic cohomology.
\end{abstract}

\maketitle

\section{Introduction}
Strict polynomial functor categories have been kept in great consideration in the last twenty years, since Friedlander and Suslin \cite{FS} first introduced them and showed that computing cohomology inside them gave access to cohomology in the category of schemes. More recently, this category $\Pcal$ saw its \textit{super} (=$\Zdz$-graded) equivalent $\Pbold$ to come up in the paper of Axtell \cite{Axtell}, giving rise to updated questions and computations. It was an interest of the author to compute spaces of the form $\Ext_\Pbold^*(F \circ A, G \circ B)$, with $A$ and $B$ generic additive superfunctors, in relation to the bare cohomology of $F$ and $G$. He managed to find a formula \cite[Thm 4.16]{Iac} in case one of the two is additive. A generalisation for all $F,G \in \Pcal$ is conjectured at the very end of the same paper, one of its particular cases ($A=B=\Ibold_0^{(r)}$) giving a graded isomorphism natural in $F,G$:
\begin{conj}\label{but}
$\Ext^*_\Pbold(F_0^{(r)}, G_0^{(r)}) \simeq \Ext_\Pcal^* (F, G_{\Ebold{r}}) \: .$
\end{conj}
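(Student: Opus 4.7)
The plan is to introduce a twisting spectral sequence converging to $\Ext^*_\Pbold(F_0^{(r)}, G_0^{(r)})$ whose $E_2$-page reads as classical $\Ext$-groups in $\Pcal$ involving parametrised/twisted versions of $G$, and then to show that under the hypotheses of Conjecture \ref{but} the sequence collapses onto $\Ext^*_\Pcal(F, G_{\Ebold{r}})$.

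To build such a sequence, I would resolve $G_0^{(r)}$ in $\Pbold$ in a way adapted to the $\Zdz$-grading: take an injective resolution of $G$ in $\Pcal$, lift it through the purely-even embedding $\Pcal \hookrightarrow \Pbold,\ F \mapsto F_0$, and then measure the failure of the super Frobenius twist to remain exact on parametrisations involving odd elements. The resulting bicomplex, filtered by the number of odd tensors, produces a first-quadrant spectral sequence whose $E_1$-term involves $\Hom_\Pcal(F, -)$ applied to a family of parametrised twisted objects, and whose abutment is the desired super Ext.

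The argument then has two crucial steps. First, the identification of the $E_2$-page with $\Ext^*_\Pcal(F, G_{\Ebold{r}})$: the parametrisation by $\Ebold{r}$ should arise naturally from the interplay between the Frobenius twist and the super structure, a phenomenon already observed in the additive case \cite[Thm 4.16]{Iac}, which serves as the base case. Second, the degeneration at $E_2$: for the base case $A = B = \Ibold_0^{(r)}$, a direct Yoneda computation matches $\Ext^*_\Pcal(\Ibold, \Ibold_{\Ebold{r}})$, so the collapse holds on generators; general $F, G$ then follow by exactness of both sides in each variable, reducing the check to standard projective/injective generating families and by naturality of all the constructions involved.

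The main obstacle will be the collapse step. Even with a clean description of the $E_2$-page, killing the higher differentials $d_s$ for $s \geq 2$ requires either an intrinsic weight/parity argument forcing the targets of such differentials to vanish for bidegree reasons, or an explicit computation on generators together with careful sign accounting from the $\Zdz$-grading. This subtlety is genuinely super-cohomological, and the ``computation in low degrees'' announced in the abstract is likely exactly where the vanishing of these differentials can be rigorously established — from which the generic-cohomology consequences, and a proof of Conjecture \ref{but} at least in a stable range, would follow.
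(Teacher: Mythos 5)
Your high-level strategy — set up a first-quadrant spectral sequence abutting to $\Ext^*_\Pbold(F_0^{(r)}, G_0^{(r)})$ with $E_2$-page given by classical $\Ext_\Pcal(F, -)$ of something parametrised by $\Ebold{r}$, and then study collapse — matches what the paper does. But the specifics diverge in ways that matter, and one step is a genuine error.

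First, the construction of the spectral sequence. You propose to take an injective coresolution of $G$ in $\Pcal$, lift it through the purely-even embedding, and filter by the number of odd tensors. This is not clearly well-defined: applying $(-)_0^{(r)}$ to an injective of $\Pcal$ does not yield an injective of $\Pbold$, so this lift is not a coresolution in the required sense, and the ``failure of exactness on odd parametrisations'' you invoke needs a precise mechanism you don't supply. The paper instead uses the completely standard derived-adjoint (Grothendieck-type) spectral sequence (Proposition \ref{appox}): $c = (-)\circ\Ibold_0^{(r)}$ is exact, it has a right adjoint $\rho$ (computed explicitly in Proposition \ref{rightadj} via Schur superalgebras), and $E_2^{s,t} = \Ext^s_\Pcal(F, \Rbold^t\rho(G_0^{(r)}))$. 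The identification $\Rbold^*\rho(G_0^{(r)}) \simeq G_{\Ebold{r}}$ (Lemma \ref{isokr}) is then a concrete computation, first for $G=S^d_V$ using the Drupieski--Kujawa computation of $\Ext^*_\Pbold(\Gamma_0^{d(r)}, S_0^{d(r)})$, then in general by Kuhn duality.

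Second, and more seriously, your reduction from the base case $A=B=\Ibold_0^{(r)}$ to general $F,G$ ``by exactness of both sides in each variable, reducing the check to standard projective/injective generating families'' does not work. $\Ext^*$ is not an exact functor in either variable, so one cannot propagate a degeneration statement from generators to arbitrary objects this way. Indeed, the paper does \emph{not} prove the conjecture in full generality: it is left as Conjecture \ref{conjsequence}. It is established only (a) when $G$ is an injective cogenerator $S^d_V$, where the sequence is concentrated in a single row so collapse is automatic (Corollary \ref{appoggione}), and (b) in low degrees $*<2p^{2r-1}$ (Theorem \ref{isobasdegres}). The mechanism for (b) is not an intrinsic vanishing-of-differentials argument of the kind you gesture at, but a comparison with the classical twisting spectral sequence for $\Ext^*_\Pcal(F^{(r)}, G^{(r)})$, whose collapse is already known \cite[Cor.~5]{TouzeUnivSS}. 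The comparison is realised by the restriction morphism $res_0$ (Proposition \ref{reszeross}), which on the $E_2$-page is $G(\pi_r)_*$ with $\pi_r\colon\Ebold{r}\to E_r$ a truncation. A weight count (Lemma \ref{ooooh}, after replacing $(F,G,r)$ by $(F^{(r-1)},G^{(r-1)},1)$ to optimise the bound) shows $G(\pi_r)_*$ is an isomorphism on the relevant range, and the classical Theorem \ref{classicaliso} supplies the natural isomorphism on the classical side. You correctly suspect that the low-degree result is where collapse can be rigorously controlled, but the tool the paper actually uses — comparison with the known classical collapse through $res_0$ rather than any direct differential-vanishing argument — is the missing idea in your sketch.
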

Here $\Ebold{r}$ stands for the Yoneda superalgebra $\Ext^*_\Pbold(\Ibold_0^{(r)}, \Ibold_0^{(r)})$ and $G_{\Ebold{r}}$ denotes the parametrised functor $G(\Ebold{r}\otimes -)$. As explained in detail in \cite{Iac}, $G_{\Ebold{r}}$ bears a grading, and the right side of the latter isomorphism is graded by means of the total degree. The goal of this paper is to prove a version in low degrees of this isomorphism. Such result yields an important corollary at the level of \emph{generic cohomology} of $F$ and $G$, an object that was introduced in \cite{Cline} in the category of representations of an algebraic group. We will generalise it to the context of strict polynomial superfunctors, defining the \emph{classical} generic cohomology of $F$ and $G$ to be the colimit of the diagram $\{ \Ext^*_\Pcal(F^{(r)}, G^{(r)}) \}_{r \geq 1}$ and the \emph{super} generic cohomology to be the colimit of $\{ \Ext^*_\Pbold(F_0^{(r)}, G_0^{(r)}) \}_{r \geq 1}$, both considered with the twisting maps on $\Ext$. What we prove in this paper is the following:
\begin{thm}\label{oooh}
	The isomorphism of Conjecture \ref{but} holds in degrees $* < 2p^{2r-1}$. As a consequence, the super and classical generic cohomologies of $F$ and $G$ are isomorphic.
\end{thm}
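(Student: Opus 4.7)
The plan is to deduce Theorem~\ref{oooh} from the analysis of a spectral sequence, the twisting spectral sequence alluded to in the title, whose $E_2$-page is expressed in terms of $\Ext_\Pcal^*(F, G_{\Ebold{r}})$ and which converges to $\Ext^*_\Pbold(F_0^{(r)}, G_0^{(r)})$. The conjectured isomorphism of Conjecture~\ref{but} will then be equivalent to the degeneration of this spectral sequence at $E_2$, and the low-degree version will follow from a bound on the first possibly non-zero differential.

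First, I would construct the spectral sequence. A natural candidate comes from taking a resolution of $\Ibold_0^{(r)}$ that records the extra super generators in $\Ebold{r}$ beyond the classical Yoneda algebra, and filtering by the internal grading that counts such generators. The $E_2$-page of the associated hyper-$\Ext$ spectral sequence can then be identified with a regrading of $\Ext^*_\Pcal(F, G_{\Ebold{r}})$, essentially by applying \cite[Thm~4.16]{Iac} line by line to identify each graded piece of the filtration.

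Second, I would control the differentials in the low-degree range. The isomorphism in degrees $* < 2p^{2r-1}$ follows once one shows that no non-trivial differential on $E_2$ or on any later page can land in that range. The specific bound $2p^{2r-1}$ is strongly suggestive of the cohomological degree of the first \emph{genuinely super} generator of $\Ebold{r}$: any differential producing a class below that degree should be forced to come from the classical part, and hence vanish for degree reasons. This is where I expect the main technical obstacle to be, namely translating the structural information on $\Ebold{r}$ from \cite{Iac} into an explicit vanishing range for the spectral sequence differentials, and checking that the possible targets of $d_k$ with $k \geq 2$ are either zero or forced to cancel in the relevant bidegrees.

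For the corollary, once the first assertion is granted one combines it with the classical version (applicable in the same range via the standard twisting spectral sequence in $\Pcal$) to compare both generic cohomologies against a common parametrised $\Ext$. Since $2p^{2r-1} \to \infty$ as $r$ grows, for any fixed cohomological degree the low-degree isomorphism eventually becomes an honest isomorphism, and the compatibility of the twisting maps $r \rightsquigarrow r+1$ with the isomorphism of Conjecture~\ref{but} identifies the two colimits as claimed.
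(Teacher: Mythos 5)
Your outline identifies the right skeleton: a spectral sequence whose $E_2$-page is $\Ext_\Pcal^*(F,(G_{\Ebold{r}})^t)$ converging to $\Ext^*_\Pbold(F_0^{(r)},G_0^{(r)})$, for which Conjecture~\ref{but} is precisely degeneration at $E_2$, plus a colimit argument for the corollary. But the central mechanism by which the paper kills the differentials in the stated range is missing from your plan, and your substitute for it would not work as described. You propose to argue that differentials vanish ``for degree reasons'' by inspecting the structure of $\Ebold{r}$; there is no such purely internal argument available, because a priori nothing constrains $d_k$ between two nonzero bidegrees in the range. What the paper does instead is build a morphism of spectral sequences $res_0\colon\boldsymbol{II}^{*,*}_{F,G,r}\to II^{*,*}_{F,G,r}$ to the \emph{classical} twisting spectral sequence, which is known to collapse at $E_2$ by Theorem~\ref{classicaliso}; on second pages $res_0$ is the push-forward $G(\pi_r)_*$ induced by the projection $\pi_r\colon\Ebold{r}\to E_r$. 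One then shows that $G(\pi_r)_*$ is injective on the relevant columns (Lemma~\ref{padrelivio}, Corollary~\ref{ciolino}), and injectivity of a spectral-sequence morphism into a degenerate target forces the source differentials to vanish. That comparison morphism is the essential missing ingredient in your proposal.

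There is also a quantitative gap. Your heuristic reads off the bound from the first ``genuinely super'' degree of $\Ebold{r}$, which is $2p^r$; that is strictly weaker than the claimed $2p^{2r-1}$ as soon as $r\ge2$. The paper reaches the better bound by a reparametrisation: it runs the $r=1$ spectral sequence with coefficients $F^{(r-1)}$, $G^{(r-1)}$, so that $(G^{(r-1)})_{\Ebold{1}}=(G_{\Ebold{1}^{(r-1)}})^{(r-1)}$ and the weight of the super part is rescaled by $p^{2(r-1)}$ (Lemma~\ref{ooooh}); combined with $\Ebold{1}^{(r-1)}\otimes E_{r-1}\simeq\Ebold{r}$ and Theorem~\ref{classicaliso} this yields the isomorphism in degrees $*<2p^{2r-1}$. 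Your argument for the corollary is essentially sound, though the compatibility of the identification with the twisting maps $r\rightsquigarrow r+1$ is not automatic and in the paper rests on the explicit description of $res_0$ and Lemma~\ref{reszeroprop}; you would need to verify that commutativity rather than assert it.
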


Our approach to the proof is based on the study of a suitable spectral sequence, which at its second page takes the form
\[ E_2^{s,t} = \Ext_\Pcal ^s (F, (G_{\Ebold{r}})^t) \Rightarrow \Ext^{s+t}_\Pbold(F_0^{(r)}, G_0^{(r)}) \: . \]
%
%We point out that validity of Conjecture \ref{but} is equivalent to the collapsing of this sequence at the second page. 
Such spectral sequence has a classical counterpart whose collapsing is known \cite{TouzeUnivSS}. Nevertheless, in the classical context we have more powerful results. In fact, an analogue of the \emph{natural} isomorphism of Conjecture \ref{but} holds for the classical groups $\Ext^*_\Pcal(F^{(r)}, G^{(r)})$ is true, as we refer in Thorem \ref{classicaliso}. The idea is then to use this information and compare the two sequences by means of a restriction morphism. 
\section{Recollections}
Throughout all the paper $\kk$ is a field of odd prime characteristic $p$ and $\V$, resp. $\sV$, is the category of finite-dimensional $\kk$-vector spaces, resp. super $\kk$-vector spaces. Let $\Pbold$ be the category of strict polynomial superfunctors and $\Pbold_d$ its full subcategory of $d$-homogenous objects. It is a \emph{$\kk$-superlinear} (i.e. $\sV$-enriched) category. An important role is played there by the super divided powers $\Gammabold^d$, in the sense of the two following standard results:
\begin{thm}[Yoneda lemma]
	Let $\Gammabold^{d,V} := \Gammabold^d \Hom(V,-)$. There exists an isomorphism of graded spaces, natural with respect to $F \in \Pbold_d$ and $V \in \sV$:
	\[ \Hom_{\Pbold_d}(\Gammabold^{d,V}, F) \simeq F(V) \: . \]
\end{thm}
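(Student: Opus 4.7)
The plan is to adapt the Friedlander--Suslin Yoneda lemma \cite{FS} to the $\sV$-enriched setting of $\Pbold_d$, while tracking the Koszul signs that arise from the super structure.

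First, I would define the evaluation map
\[ \ev \colon \Hom_{\Pbold_d}(\Gammabold^{d,V}, F) \longrightarrow F(V), \qquad \eta \longmapsto \eta_V\bigl(\gamma_d(\id_V)\bigr), \]
where $\gamma_d(\id_V) \in \Gammabold^d(\Hom(V,V))$ is the canonical even element given by the $d$-th divided power of the identity. Since $\eta_V$ is super-linear and $\gamma_d(\id_V)$ lives in degree $0$, $\ev$ respects the $\Zdz$-grading; naturality in $F$ and $V$ is immediate from the construction.

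Second, I would build an inverse. The key input is the universal property of the super divided power: because $F$ is homogeneous of degree $d$, the assignment
\[ \Hom(V,W) \longrightarrow \Hom_{\sV}(F(V),F(W)), \qquad f \longmapsto F(f), \]
is a super-polynomial map of degree $d$, and so factors uniquely through a $\sV$-linear map
\[ \widetilde{F}_{V,W} \colon \Gammabold^d(\Hom(V,W)) \longrightarrow \Hom_{\sV}(F(V),F(W)). \]
Given $x \in F(V)$, I set $\tau^x_W(\omega) := \widetilde{F}_{V,W}(\omega)(x)$; checking that $\tau^x = (\tau^x_W)_W$ is a morphism in $\Pbold_d$ reduces, via the universal property applied on the two sides of each naturality square, to the functoriality of $F$.

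Third, I would verify that $\ev$ and $x \mapsto \tau^x$ are mutually inverse. The identity $\ev(\tau^x) = x$ is clear since $\widetilde{F}_{V,V}(\gamma_d(\id_V)) = F(\id_V) = \id_{F(V)}$. Conversely, for $\eta \in \Hom_{\Pbold_d}(\Gammabold^{d,V}, F)$, both $\eta_W$ and $\tau^{\ev(\eta)}_W$ are super-linear maps out of $\Gammabold^d(\Hom(V,W))$ which agree on all divided monomials $\gamma_d(f)$ thanks to the naturality of $\eta$; by the universal property they coincide. Naturality of the resulting isomorphism in $F$ is automatic, and naturality in $V$ follows from the contravariant functoriality of $\Hom(-,-)$ in the first slot combined with the covariance of $F(V)$ in $V$.

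The main obstacle is the careful sign bookkeeping inside the super universal property of $\Gammabold^d$: one must confirm that the factorisation of a super-polynomial map of degree $d$ through $\Gammabold^d$ is $\sV$-linear in a way that is natural in the source and target, so that the naturality squares for $\tau^x$ commute in the super-graded sense. Once this universal property is established in the super setting (a standard feature of $\Pbold$, cf.\ \cite{Axtell}), the rest of the argument is formally identical to the classical one.
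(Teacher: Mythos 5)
The paper states this Yoneda lemma as a "standard result" and cites it without proof (it is Theorem 2.10 of Axtell's paper, the super analogue of Friedlander--Suslin's Theorem 2.10 in \cite{FS}), so there is no in-paper proof to compare against. Your argument is the standard enriched-Yoneda argument, correctly adapted: evaluation at $\gamma_d(\id_V)$ gives the forward map, the enriched-functor structure of $F$ (the factorisation $\widetilde F_{V,W}\colon \Gammabold^d\Hom(V,W)\to \Hom_{\sV}(F(V),F(W))$, which in the usual definition of $\Pbold_d$ as $\sV$-enriched functors on $\Gammabold^d\sV$ is part of the data, not something to be derived) gives the inverse, and the two checks are correct.

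One caveat worth tightening: in the last step you argue that $\eta_W$ and $\tau^{\ev(\eta)}_W$ coincide because they "agree on all divided monomials $\gamma_d(f)$." In the super setting the elements $\gamma_d(f)$ with $f$ even do \emph{not} span $\Gammabold^d\Hom(V,W) \simeq \bigoplus_{a+b=d}\Gamma^a(\Hom(V,W)_0)\otimes\Lambda^b(\Hom(V,W)_1)$, so this cannot be a literal spanning argument. What one actually uses is enriched naturality of $\eta$: both $\eta_W$ and $\tau^{\ev(\eta)}_W$ are $\sV$-linear maps $\Gammabold^d\Hom(V,W)\to F(W)$, and both fit into the same naturality diagram against $\widetilde F_{V,W}$ and $\eta_V$, which forces equality once one knows $\eta_V$ is determined on $\gamma_d(\id_V)$. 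Equivalently, one works after base change to extend scalars and invokes the universal property of $\Gammabold^d$ as representing degree-$d$ polynomial laws, as you gesture at in your final paragraph. This is exactly the "sign bookkeeping" obstacle you flag, and it does need to be spelled out, but it is standard and your overall structure is sound.
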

\begin{cor}	For all $V \in \sV$, the object $\Gammabold^{d,V}$ is projective in $\Pbold$. Moreover, the collection $\{\Gammabold^{d,V}\}_{V \in \sV}$ forms a set of projective generators in $\Pbold$. In particular, their Kuhn duals $\Sbold^d_V$ form a set of injective cogenerators.
\end{cor}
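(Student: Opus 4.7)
The plan is to deduce everything from the Yoneda isomorphism stated just above, together with the fact that (co)limits in $\Pbold_d$ are computed pointwise on $\sV$.

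First, for projectivity of $\Gammabold^{d,V}$, I would unpack the Yoneda isomorphism as saying that the functor $\Hom_{\Pbold_d}(\Gammabold^{d,V}, -) \colon \Pbold_d \to \sV$ is naturally isomorphic to the evaluation functor $\ev_V \colon F \mapsto F(V)$. Since $\Pbold_d$ is a full subcategory of an $\sV$-enriched functor category indexed by $\sV$, a sequence $0 \to F' \to F \to F'' \to 0$ is exact in $\Pbold_d$ if and only if it is exact after applying $\ev_W$ for every $W \in \sV$; in particular $\ev_V$ is exact. Hence $\Hom_{\Pbold_d}(\Gammabold^{d,V}, -)$ is exact, which is exactly the definition of $\Gammabold^{d,V}$ being projective.

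Next, to show that $\{\Gammabold^{d,V}\}_{V \in \sV}$ is a family of projective generators, I would show that every $F \in \Pbold_d$ admits a surjection from a direct sum of such objects. For each $V \in \sV$ and each (homogeneous) element $x \in F(V)$, the Yoneda isomorphism produces a morphism $\varphi_{V,x} \colon \Gammabold^{d,V} \to F$ whose image under $\ev_V$ contains $x$. Assembling these into a single map
\[ \Phi \colon \bigoplus_{V \in \sV} \bigoplus_{x \in F(V)} \Gammabold^{d,V} \longrightarrow F \]
makes $\ev_V(\Phi)$ surjective for every $V$, because by construction its image contains every $x \in F(V)$; again by pointwise exactness this means $\Phi$ itself is an epimorphism in $\Pbold_d$. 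Since $\Pbold$ decomposes as the direct product over $d$ of the subcategories $\Pbold_d$, this yields a set of projective generators of $\Pbold$.

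Finally, for the statement about the Kuhn duals, I would invoke the formal properties of the duality $(-)\kuhn$: it is a contravariant $\kk$-linear exact anti-equivalence of $\Pbold_d$ with itself (as established in the underlying setup of strict polynomial superfunctors). Any such duality transforms projective generators into injective cogenerators: exactness reverses the defining condition of projectivity into that of injectivity, and an anti-equivalence turns epimorphisms onto $F$ from sums of $\Gammabold^{d,V}$ into monomorphisms from $F\kuhn$ into products of $\Sbold^d_V = (\Gammabold^{d,V})\kuhn$. I do not expect any serious obstacle; the only point that warrants care is the verification that evaluation (and Kuhn duality) really behaves exactly in the $\Zdz$-graded setting, but this is a direct check on super vector spaces where short exactness agrees with the underlying ungraded notion.
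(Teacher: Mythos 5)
The paper states this corollary without proof, as a standard consequence of the Yoneda lemma, and your argument is precisely that standard consequence: projectivity of $\Gammabold^{d,V}$ from exactness of $\ev_V$, generation via Yoneda, and the dual statement from exactness of Kuhn duality. This is correct and matches the (implicit) intended argument. One point to tidy: the coproduct $\bigoplus_{V \in \sV}\bigoplus_{x \in F(V)} \Gammabold^{d,V}$ is indexed by a proper class and, even after passing to a skeleton, is an infinite direct sum that does not exist in $\Pbold_d$, whose objects take values in finite-dimensional super spaces; either phrase generation as the joint faithfulness of the functors $\Hom_{\Pbold_d}(\Gammabold^{d,V},-)\simeq \ev_V$ (your morphisms $\varphi_{V,x}$ already show that these detect proper subobjects), or observe that a single $V=\kk^{d|d}$ together with the finite-dimensionality of $F(V)$ yields a surjection from a finite direct sum, as in the Schur superalgebra equivalence of Theorem \ref{equivfuncmod}.
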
 
$\Gammabold^d$ and $\Sbold^d$ play thus the same role that $\Gamma^d$ and $S^d$ play in $\Pcal_d$. Nearly any cohomological computation we are going to do on a generic functor $F \in \Pcal_d$ will likely pass for the specific case $F=\Gamma^{d,V}$ or $F=S^d_V$ ($V \in \V$).
%In the classical category $\Pcal_d$ (which is \emph{$\kk$-linear} = $\V$-enriched) the $\Gamma^{d,V}$ (resp. $S^d_V$), $V \in \V$, are projective (resp. injective) objects who generate the category. 
In the classical context, we sometimes use another set of projective generators. Set 
\[ \Lambda(n,d) := \{\lambda = (\lambda_1, \dots, \lambda_n) \mid \lambda_i \geq 0, \; \; \sum_i \lambda_i = d \} \]
and, for a fixed $\lambda \in \Lambda(n,d)$, 
\[ \Gamma^\lambda := \Gamma^{\lambda_1} \otimes \dots \otimes \Gamma^{\lambda_n} \: . \]
The exponential property of $\Gamma$ gives a decomposition (not natural in $V$)
\[ \Gamma^{d,V} \simeq \bigoplus\limits_{\lambda \in \Lambda(dim(V),d)} \Gamma^\lambda \]
which implies in particular that $\{\Gamma^\lambda\}_{\lambda \in \Lambda(n,d) }$ is another set of projective generators of $\Pcal$. In the same manner we define the $S^\lambda$.

We now recall the main concept of the paper. The $r$-th \emph{Frobenius twist} of a super space $V$ is defined as $V^{(r)} := V \otimes_\varphi \kk$, where $\varphi : \kk \rightarrow \kk
$ is the $p^r$-th power map. If $f:V \rightarrow W$ is a linear morphism, $f^{(r)} := f \otimes_\varphi 1$ is linear as a morphism $V^{(r)} \rightarrow W^{(r)}$. The formula $V \mapsto V^{(r)}$ determines a superfunctor that is strict polynomial of degree $p^r$ and noted by $\Ibold^{(r)}$. It admits two important subfunctors, whose existence \emph{a priori} is not trivial at all (see \cite[Sect. 2.7]{Drupieski} or \cite[Sect. 3]{Iac}):

\begin{prop}
	There exist subfunctors of $\Ibold^{(r)}$ defined by
	\[ \Ibold_0^{(r)} : V \longmapsto V_0^{(r)} \]
	\[ \Ibold_1^{(r)} : V \longmapsto V_1^{(r)} \]
	such that moreover $\Ibold^{(r)} = \Ibold_0^{(r)} \oplus \Ibold_1^{(r)}$.
\end{prop}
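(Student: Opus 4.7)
The plan is to realise $\Ibold^{(r)}$ as a subfunctor of $\Gammabold^{p^r}$ via the Frobenius-twist embedding, and then to refine the embedding using the parity grading. Concretely, one defines a natural map $\alpha_V : V^{(r)} \hookrightarrow \Gammabold^{p^r}(V)$ by $v \otimes_\varphi \lambda \mapsto \lambda \cdot v^{[p^r]}$, where $v^{[p^r]}$ denotes the $p^r$-fold divided power of $v$ in the super divided power algebra. As in the classical case, $\alpha_V$ identifies $\Ibold^{(r)}$ with a sub-strict-polynomial-superfunctor of $\Gammabold^{p^r}$, reducing the problem to an analysis inside $\Gammabold^{p^r}$.

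The crucial observation is a parity computation: since $p$ is odd, $p^r$ is odd, and for a homogeneous $v$ of parity $\epsilon$ the element $v^{[p^r]}$ of $\Gammabold^{p^r}(V)$ has parity $\epsilon \cdot p^r \bmod 2 = \epsilon$. Consequently, $\alpha_V$ restricts to inclusions $V_0^{(r)} \hookrightarrow \Gammabold^{p^r}(V)_0$ and $V_1^{(r)} \hookrightarrow \Gammabold^{p^r}(V)_1$, yielding a pointwise decomposition $\Ibold^{(r)}(V) = V_0^{(r)} \oplus V_1^{(r)}$ of super vector spaces.

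To promote this to a decomposition in $\Pbold$, I would check that the action on $\Ibold^{(r)}$ coming from $\Gammabold^{p^r}(\Hom_{\sV}(V,W))$ factors through the Frobenius summand $\Hom_{\sV}(V,W)^{(r)}$, mirroring the classical factorisation. Since this summand splits as $(\Hom_{\sV}(V,W)_0)^{(r)} \oplus (\Hom_{\sV}(V,W)_1)^{(r)}$ by the same parity argument, and the Frobenius twist $f^{(r)}$ of a homogeneous $f$ has the same parity as $f$, the action preserves the parity decomposition of $V^{(r)}$. Thus $\Ibold_0^{(r)}$ and $\Ibold_1^{(r)}$ inherit a strict polynomial superfunctor structure compatible with inclusion into $\Ibold^{(r)}$, and their direct sum recovers $\Ibold^{(r)}$.

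The main obstacle is the technical bookkeeping of super signs and parities: one must rigorously verify that $v^{[p^r]}$ has parity $\epsilon \cdot p^r \bmod 2$ in the super divided power algebra, and that the strict polynomial action genuinely factors through the Frobenius summand of $\Gammabold^{p^r}(\Hom_{\sV}(V,W))$. This is where the cited machinery of Drupieski plays its role, extending the theory of Frobenius twists from the classical to the super setting.
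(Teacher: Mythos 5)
Your embedding $v \mapsto v^{[p^r]}$ into $\Gammabold^{p^r}(V)$ does not exist when $v$ is odd, and this breaks the entire argument. Recall $\Gammabold^{p^r}(V) = (V^{\otimes p^r})^{\Sigma_{p^r}}$ with the \emph{signed} permutation action; equivalently $\Gammabold^{p^r}(V) \simeq \bigoplus_{a+b=p^r}\Gamma^a(V_0)\otimes\Lambda^b(V_1)$. For odd homogeneous $v$, every transposition acts on $v^{\otimes p^r}$ by $-1$, so $v^{\otimes p^r}$ is $\Sigma_{p^r}$-\emph{anti}-invariant and lies outside $\Gammabold^{p^r}(V)$; said differently, the super divided power algebra $\Gamma(V_0)\otimes\Lambda(V_1)$ carries divided powers only on even elements, since odd elements square to zero, and $v^{\wedge p^r} = 0$ in $\Lambda^{p^r}(V_1)$ already for $p^r\geq 2$. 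There is thus no natural map $V_1^{(r)} \to \Gammabold^{p^r}(V)$ of the proposed form (indeed none at all: the only odd summand of $\Gammabold^{p^r}(V)$ that could naturally receive $V_1$ is $\Gamma^{p^r-1}(V_0)\otimes V_1$, which has no natural nonzero map from $V_1$). Your parity computation $\epsilon\cdot p^r\equiv\epsilon$ is correct but beside the point — the obstruction is not a parity mismatch but the nonexistence of the element $v^{[p^r]}$. Consequently the central claim that ``$\alpha_V$ identifies $\Ibold^{(r)}$ with a sub-strict-polynomial-superfunctor of $\Gammabold^{p^r}$'' is false, and only $\Ibold_0^{(r)}$ can be so realized.

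This is precisely the subtlety the paper flags when it says existence of these subfunctors is ``a priori not trivial at all.'' A correct argument must house the two parities in different ambient functors: the even twist embeds in $\Gammabold^{p^r}$ as in the classical case, while the odd twist can be realized, for instance, inside $\Lambdabold^{p^r}(V) \simeq \bigoplus_{a+b=p^r}\Lambda^a(V_0)\otimes\Gamma^b(V_1)$, where divided powers of odd elements do exist and $v\mapsto v^{[p^r]}$ lands in the odd part $\Gamma^{p^r}(V_1)$ (here the fact that $p^r$ is odd matters); alternatively one argues at the level of supergroup schemes as in the cited Drupieski reference. Either way, the two parities require separate constructions before the direct sum decomposition $\Ibold^{(r)} = \Ibold_0^{(r)}\oplus\Ibold_1^{(r)}$ can be assembled, and your single embedding into $\Gammabold^{p^r}$ cannot do both jobs.
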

The operation we would like to perform now is "twisting" a polynomial superfunctor, i.e. precomposing it by a twist. This can be made in more than one fashion. For example, if $F \in \Pbold$, one can define $F \circ \Ibold^{(r)}$ and its subfunctors $F \circ \Ibold_0^{(r)}, F \circ \Ibold_1^{(r)}$. But, thanks to special properties of the twist, it also makes sense to precompose a \emph{classical} functor $F \in \Pcal$ by $\Ibold_0^{(r)}$ or $\Ibold_1^{(r)}$ to obtain a superfunctor \cite[Cor. 3.8]{Iac}, which we denote by $F_0^{(r)}$ and $F_1^{(r)}$ respectively. This last way of twisting is the one which will concern us through all the paper, starting with the main Conjecture \ref{but} that we have announced at the beginning. 

We conclude this section by giving a useful tool for $\Ext$ computations. The following is a super analogue of \cite[Prop. 5.2]{FS}:
\begin{thm}\label{FSext}
	Let $A,B$ strict polynomial superfunctors of degree $s,t$ respectively, such that $s+t = p^r d$. Then 
	\begin{itemize}
		\item $\Ext_\Pbold^*(A\otimes B, (S^d_V)_0^{(r)}) = 0$ if $s,t$ are not divisible by $p^r$.
		\item If $s=p^r s'$ and $t=p^r t'$ there is an isomorphism 
		\[ \Ext_\Pbold^*(A\otimes B, (S^d_V)_0^{(r)}) \simeq \Ext_\Pbold^*(A, (S^{s'}_V)_0^{(r)}) \otimes \Ext_\Pbold^*(B, (S^{t'}_V)_0^{(r)}) \]
		induced by cross product and multiplication on $S^*$.
	\end{itemize} 
\end{thm}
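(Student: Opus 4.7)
The natural map in the statement is the composition of the external cross product in $\Ext$ with postcomposition by the $r$-th twist of the multiplication in the symmetric superalgebra: the map $m : S^{s'}_V \otimes S^{t'}_V \to S^d_V$ twists to
\[ m_0^{(r)} : (S^{s'}_V)_0^{(r)} \otimes (S^{t'}_V)_0^{(r)} \to (S^d_V)_0^{(r)}, \]
and the plan is to prove the statement by adapting the proof of \cite[Prop.\ 5.2]{FS} to the super setting.

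The strategy is to compute both sides via injective coresolutions. Specifically, one constructs injective coresolutions $J_1^\bullet \to (S^{s'}_V)_0^{(r)}$ and $J_2^\bullet \to (S^{t'}_V)_0^{(r)}$ whose tensor product $J_1^\bullet \otimes J_2^\bullet$ is itself an injective coresolution of $(S^d_V)_0^{(r)}$ with augmentation induced by $m_0^{(r)}$. Such Hopf-compatible coresolutions are the super analogue of the Frobenius-twisted coresolutions of \cite{FS}, available in the super setting thanks to \cite{Drupieski}. Given such resolutions, the external cross product provides a chain map
\[ \Hom_\Pbold(A, J_1^\bullet) \otimes \Hom_\Pbold(B, J_2^\bullet) \longrightarrow \Hom_\Pbold(A \otimes B, J_1^\bullet \otimes J_2^\bullet), \]
whose induced map on cohomology, combined with Künneth over the field $\kk$ on the source, is precisely the map of the theorem.

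To verify it is a quasi-isomorphism, additivity reduces the problem to the case of projective generators $A = \Gammabold^{s, V_1}$ and $B = \Gammabold^{t, V_2}$. Here the Yoneda lemma and the exponential property of $S$ yield
\[ (S^d_V)_0^{(r)}(V_1 \oplus V_2) \simeq \bigoplus_{i+j=d} (S^i_V)_0^{(r)}(V_1) \otimes (S^j_V)_0^{(r)}(V_2). \]
Since $(S^i_V)_0^{(r)}$ is homogeneous of degree $p^r i$, the bi-weight $(s,t)$ piece vanishes unless $s = p^r s'$ and $t = p^r t'$, yielding both the vanishing statement and, in the divisible case, the identification of the surviving summand with the right-hand side of the theorem. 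The main obstacle is the construction of the super Hopf-compatible injective coresolutions: the structural argument of \cite{FS} carries over, but the $\Zdz$-grading requires careful bookkeeping of Koszul signs throughout.
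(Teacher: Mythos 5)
The paper does not prove Theorem~\ref{FSext}: it is stated in the ``Recollections'' section as ``a super analogue of \cite[Prop.~5.2]{FS}'' with no argument given, so there is no proof of record to compare against. Evaluating your sketch on its own terms, you identify the correct map (external cup product followed by twisted multiplication) and the correct strategy in spirit (tensoring Hopf-compatible injective coresolutions, as in [FS]), but the key step is not closed.

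The gap is in ``additivity reduces the problem to projective generators.'' Since $\Ext^*_\Pbold(-,X)$ is not exact in the contravariant variable, knowing the comparison map is a quasi-isomorphism for projective $A,B$ does not propagate to arbitrary $A,B$. The only way additivity would close this is if the chain map
\[ \Hom_\Pbold(A,J_1^\bullet)\otimes\Hom_\Pbold(B,J_2^\bullet)\longrightarrow\Hom_\Pbold(A\otimes B,J_1^\bullet\otimes J_2^\bullet) \]
were a \emph{level-wise} isomorphism on projectives (then a five-lemma argument, using the injectivity of the targets, would extend it). But it is not: by Yoneda, $\Hom_\Pbold(\Gammabold^{s,V_1}\otimes\Gammabold^{t,V_2},J_1^i\otimes J_2^j)$ is the full bi-weight $(s,t)$ summand of $J_1^i(V_1\oplus V_2)\otimes J_2^j(V_1\oplus V_2)$, which strictly contains the image $J_1^i(V_1)\otimes J_2^j(V_2)$ of the cross product --- already for $J_1^i=\Sbold^s_W$ one gets extra summands $S^a(W\otimes V_1)\otimes S^{s-a}(W\otimes V_2)$ with $0<a<s$. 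Your Yoneda/exponential computation verifies the statement only in cohomological degree $0$ for projective $A,B$, which is fine, but it does not show the comparison map is a quasi-isomorphism in general. Proving that the extra summands contribute nothing to cohomology is precisely where the specific exponential structure of the twisted (Koszul/de~Rham) coresolutions of $(S^d_V)_0^{(r)}$ must be invoked --- or, as in the classical argument and its super refinement in Drupieski--Kujawa, one should pass through bifunctors and a sum-diagonal adjunction to isolate the relevant weight at the cohomological rather than chain level. Since you explicitly leave the construction of the Hopf-compatible coresolutions open, and the reduction step does not bridge the gap, the sketch does not yet constitute a proof.
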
 
\begin{cor}\label{exponentialext} Let $V \in \V$ and $X= \Gamma^{d,V}$ or $S^d_V$. Cross product and multiplication on $S^*$ induce an isomorphism
	\[ \Ext^*_\Pbold(X_0^{\lambda(r)}, (S^d_V)_0^{(r)}) \simeq \bigotimes_i \Ext^*_\Pbold(X_0^{\lambda_i(r)}, (S^{\lambda_i}_V)_0^{(r)}) \; \; .\]
\end{cor}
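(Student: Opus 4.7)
The plan is a straightforward induction on the length $n$ of the tuple $\lambda$, iterating Theorem \ref{FSext}. The base case $n = 1$ is tautological. For the inductive step, I would split
\[ X_0^{\lambda(r)} = A \otimes B, \qquad A := X_0^{\lambda_1(r)}, \quad B := X_0^{\lambda_2(r)} \otimes \dots \otimes X_0^{\lambda_n(r)}, \]
which have homogeneous degrees $s = p^r \lambda_1$ and $t = p^r(d - \lambda_1)$. Since both are multiples of $p^r$, the second bullet of Theorem \ref{FSext} applies with $s' = \lambda_1$ and $t' = d - \lambda_1$, yielding
\[ \Ext^*_\Pbold(X_0^{\lambda(r)}, (S^d_V)_0^{(r)}) \simeq \Ext^*_\Pbold(X_0^{\lambda_1(r)}, (S^{\lambda_1}_V)_0^{(r)}) \otimes \Ext^*_\Pbold(B, (S^{d - \lambda_1}_V)_0^{(r)}) \]
via cross product followed by the multiplication $S^{\lambda_1} \otimes S^{d - \lambda_1} \to S^d$. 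Applying the inductive hypothesis to the second factor (for $\lambda' = (\lambda_2, \dots, \lambda_n) \in \Lambda(n-1, d - \lambda_1)$) decomposes it further into the expected tensor product.

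To close the argument I would check that the composite agrees with the simultaneous cross product
\[ \bigotimes_i \Ext^*_\Pbold(X_0^{\lambda_i(r)}, (S^{\lambda_i}_V)_0^{(r)}) \longrightarrow \Ext^*_\Pbold(X_0^{\lambda(r)}, (S^{\lambda_1}_V \otimes \dots \otimes S^{\lambda_n}_V)_0^{(r)}) \]
postcomposed with the iterated multiplication $S^{\lambda_1} \otimes \dots \otimes S^{\lambda_n} \to S^d$. This is a coherence check: cross product is associative, and the multiplication on the Hopf superalgebra $S^*$ is associative and graded-commutative, so the composite is independent of how the factorization $X_0^{\lambda(r)} = A \otimes B$ is bracketed and coincides with the claimed map.

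I do not anticipate a serious obstacle. The only point worth highlighting is that because every factor $X_0^{\lambda_i(r)}$ carries degree $p^r \lambda_i$, divisibility by $p^r$ is preserved at every stage of the induction, so the vanishing alternative of Theorem \ref{FSext} is never triggered and the tensor-product decomposition propagates all the way to length one.
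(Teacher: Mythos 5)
Your proof is correct and is exactly the argument the paper leaves implicit: the corollary is stated as a direct consequence of Theorem \ref{FSext}, and the paper gives no separate proof because iterating the Künneth-type isomorphism over the tensor factors of $X_0^{\lambda(r)}$ (always staying in the divisible-by-$p^r$ case) is the intended, routine deduction.
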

\section{The right adjoint of Frobenius precomposition}

We will focus on the even Frobenius twist superfunctor $\Ibold_0^{(r)}$. As said in the previous section, precomposition yields for every $d \geq 0$ an exact functor
 \begin{equation}\label{prec}
  - \circ \Ibold_0^{(r)} \: : \:  \Pcal_d \longrightarrow \Pbold_{dp^r} \; .
 \end{equation}  
We want to determine a formula for its right adjoint. In order to do so, we are going to make use of the theorems which realise $\Pbold$ and $\Pcal$ as categories of supermodules. If $m,n$ are positive integers, the \textit{Schur algebra}, resp. \textit{Schur superalgebra}, finds one of its many equivalent definitions in 
\[\sn{d} := \Gamma^d \mathrm{End}(\kk^n) \: , \]
resp.
\[\smn{d} := \Gammabold^d \mathrm{End}(\kmn) \; .\]
If $F \in \Pbold_d$, then $F(\kmn)$ is a left $\smn{d}$-supermodule via 
\begin{align*}
	\smn{d} \otimes F(\kmn) \longrightarrow & F(\kmn) \\
	\varphi \otimes v \longmapsto & (F\varphi)(v)
\end{align*}
and analogously, if $F \in \Pcal_d$, then $F(\kk^n)$ is a left $\sn{d}$-module. Moreover, if $T \in \Hom_\Pbold(F,G)$, then $T_{\kmn}$ is a $\smn{d}$-equivariant map. Analogously, if $T \in \Hom_\Pcal(F,G)$, then $T_{\kk^n}$ is $\sn{d}$-equivariant. So, evaluation at $\kmn$ and $\kk^n$ defines functors
\[\Pbold_d \longrightarrow \smn{d}-\smod \]
\[\Pcal_d \longrightarrow \sn{d}-\modd \: . \]
Both are equivalences under mild hypotheses on $n,m$: 

\begin{thm}[{\cite[Thm 4.2]{Axtell}}, {\cite[Thm 3.2]{FS}}]\label{equivfuncmod}
	Let $m,n,d$ be positive integers such that $m,n \geq d$. Then evaluation at $\kmn$ yields an equivalence of categories $\Pbold_d \simeq \smn{d}-\smod$, a quasi-inverse being provided by $M \longmapsto \Gammabold^{d, \kmn} \otimes_{\smn{d}} M$. In the classical counterpart, evaluation at $\kk^n$ yields an equivalence $\Pcal_d \simeq \sn{d}-\modd$, with quasi-inverse provided by $M \longmapsto \Gamma^{d, \kk^n} \otimes_{\sn{d}} M$.
\end{thm}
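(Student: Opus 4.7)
The plan is to realise $\Gammabold^{d,\kmn}$ as a projective generator of $\Pbold_d$ whose endomorphism algebra is exactly the Schur superalgebra $\smn{d}$, and then invoke standard Morita-type reconstruction. The classical statement follows by an identical argument with all superstructures collapsed, so I describe only the super case.

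First, the identification of the endomorphism algebra is immediate from the Yoneda lemma: taking $F = \Gammabold^{d,\kmn}$ gives
\[ \End_{\Pbold_d}(\Gammabold^{d,\kmn}) \simeq \Gammabold^{d,\kmn}(\kmn) = \Gammabold^d \Hom(\kmn,\kmn) = \smn{d}, \]
and projectivity of $\Gammabold^{d,\kmn}$ is already recorded in the corollary to Yoneda. It remains to check that under the hypothesis $m,n\geq d$ this particular object generates $\Pbold_d$. The general set of projective generators is $\{\Gammabold^{d,V}\}_{V\in\sV}$, so it suffices to realise each $\Gammabold^{d,V}$ as a summand of $\Gammabold^{d,\kmn}$ for $\dim V_0,\dim V_1 \le d$; via the exponential isomorphism
\[ \Gammabold^{d,V\oplus W} \simeq \bigoplus_{a+b=d} \Gammabold^{a,V}\otimes \Gammabold^{b,W}, \]
an embedding $V\hookrightarrow \kmn$ splits (since $\sV$ is semisimple) and exhibits every $\Gammabold^{a,V_0}\otimes\Gammabold^{b,V_1}$ as a direct summand of $\Gammabold^{d,\kmn}$, provided $a\le m$ and $b\le n$; the constraint $m,n\ge d$ guarantees this for all $(a,b)$ with $a+b=d$. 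By the bi-exactness of evaluation and the fact that every $F\in\Pbold_d$ admits a presentation by copies of the $\Gammabold^{d,V}$, this shows that $\Gammabold^{d,\kmn}$ alone generates $\Pbold_d$.

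Now the conclusion is a direct application of the standard Morita/Gabriel-type criterion: whenever $P$ is a small projective generator of an abelian category $\mathcal{A}$ enriched over a base (here $\sV$), the pair
\[ \Hom_{\mathcal{A}}(P,-) \;:\; \mathcal{A}\leftrightarrows \End_{\mathcal{A}}(P)\text{-}\smod \;:\; P\otimes_{\End_{\mathcal{A}}(P)} - \]
is an adjoint equivalence. In our setting $P=\Gammabold^{d,\kmn}$, $\Hom_{\Pbold_d}(P,-) = \ev_{\kmn}$ by Yoneda, and the right-hand side is $\smn{d}\text{-}\smod$; the quasi-inverse reads $M\mapsto \Gammabold^{d,\kmn}\otimes_{\smn{d}}M$, as announced. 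The unit $F\to \Hom(P,F(\kmn))$ is a natural transformation between right exact functors commuting with colimits, hence is an isomorphism once it is on the generator $P$ itself, where it reduces to $\smn{d}\simeq \End(P)$; the counit is handled by the dual argument.

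The main obstacle is the generation step: it is the place where the numerical hypothesis $m,n\ge d$ enters, and it requires using the super exponential decomposition carefully in both parities simultaneously. Everything else is formal Yoneda together with the general projective-generator principle.
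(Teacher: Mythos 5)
The paper does not prove this theorem; it cites it from Axtell and from Friedlander--Suslin, so there is no internal proof to compare against. Your argument is the standard Morita-theoretic one that appears in those references: exhibit $\Gammabold^{d,\kmn}$ as a projective generator whose endomorphism superalgebra is $\smn{d}$ (both by Yoneda), then invoke the projective-generator reconstruction theorem, with $\ev_{\kmn} = \Hom_{\Pbold_d}(\Gammabold^{d,\kmn},-)$ again by Yoneda. This is the right approach and the overall structure is sound.

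Two points are under-argued. First, the generation step: you assert that ``it suffices to realise each $\Gammabold^{d,V}$ as a summand of $\Gammabold^{d,\kmn}$ for $\dim V_0,\dim V_1\le d$,'' but you never justify why the $\Gammabold^{d,V}$ with both graded dimensions bounded by $d$ already generate $\Pbold_d$. The missing observation is that for an arbitrary $V$ the exponential isomorphism decomposes $\Gammabold^{d,V}$ into $\Gammabold^\lambda$'s indexed by (bi-)compositions of $d$, each of which has at most $d$ nonzero parts in each parity; up to the natural permutation isomorphism on tensor factors, each such $\Gammabold^\lambda$ is therefore already a summand of $\Gammabold^{d,\kk^{d|d}}$, hence of $\Gammabold^{d,\kmn}$ when $m,n\ge d$. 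Your subsequent sentence, ``provided $a\le m$ and $b\le n$\ldots,'' conflates degrees $a,b$ with dimensions; the actual constraint needed for the split embedding is $\dim V_0\le m$, $\dim V_1\le n$. Second, the unit/counit sentence is garbled: ``$F\to \Hom(P,F(\kmn))$'' does not typecheck as either the unit or counit of the adjunction $P\otimes_{\End(P)}(-)\dashv\Hom(P,-)$; the counit is $P\otimes_{\End(P)}\Hom(P,F)\to F$ and the unit lives on the module side. These are repairable expositional slips rather than conceptual errors, but as written the generation step has a genuine (if small) gap.
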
 

Working in a category of (super)modules is very convenient when it comes to compute adjoints. From now on, fix $n,m \geq dp^r$. The functor (\ref{prec}) corresponds via the equivalence of Theorem \ref{equivfuncmod} to the functor
\[ \sn{d}-\modd \longrightarrow \smn{dp^r}-\smod \]
\[M \longmapsto \Gamma^{d,\kk^n} (\kk^{n (r)}) \otimes_{\sn{d}} M\]
whose right adjoint is given by a well-known general formula
\[ \smn{dp^r}-\smod \longrightarrow \smn{d}-\modd \]
\[N \longmapsto \Hom_{\smn{dp^r}}(\Gamma^{d,\kk^n} (\kk^{n (r)}), N) \; .\]
Going back up again through the equivalence of Theorem \ref{equivfuncmod} we arrive to an expression of the right adjoint of (\ref{prec}):
\begin{equation}\label{adjointprec}
	\begin{array}{c}
		\Pbold_{dp^r} \longrightarrow \Pcal_d \\
		G \longmapsto \Gamma^{d,\kk^n} \otimes_{\sn{d}} \Hom_{\smn{dp^r}}(\Gamma^{d,\kk^n} (\kk^{n (r)}), G(\kk^{n|m})) \; .
	\end{array}
\end{equation}
This last formula is quite weighty, but we can relieve it. Indeed, Theorem \ref{equivfuncmod} says in particular that for any functor $H \in \Pcal_d$ with $d \leq n$ the canonical map
\begin{center}$\Gamma^{d,\kk^n} \otimes_{\sn{d}} H(\kk^n) \longrightarrow H$\end{center}
is an isomorphism of functors. Moreover, by the same theorem, the evaluation functor is fully faithful. Thanks to these two facts we can rewrite
\begin{flalign*}
\Gamma^{d,\kk^n} \otimes_{\sn{d}} \Hom_{\smn{dp^r}}(\Gamma^{d,\kk^n} (\kk^{n (r)}), G(\kk^{n|m})) \\ \simeq \Gamma^{d,\kk^n} \otimes_{\sn{d}} \Hom_{\Pbold_{dp^r}}((\Gamma^{d,\kk^n})_0^{(r)}, G) \\ \simeq \Hom_{\Pbold_{dp^r}}((\Gamma^{d, -}) _0^{(r)}, G)
\end{flalign*}
which gives the computation we wanted:

\begin{prop}\label{rightadj}
	The right adjoint $\Pbold_{dp^r} \longrightarrow \Pcal_d$ of the precomposition functor (\ref{prec}) is given by 
	\[G \longmapsto \Hom_{\Pbold_{dp^r}}((\Gamma^{d,-})_0^{(r)}, G) \: \: .\]
\end{prop}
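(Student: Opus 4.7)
The plan is to transport the precomposition functor across the equivalences of Theorem \ref{equivfuncmod}, where its right adjoint becomes immediately visible via the standard tensor-Hom adjunction, and then translate the answer back to $\Pcal_d$ and simplify. This is essentially a matter of bookkeeping given the framework already set up in this section.

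Concretely, fixing $n,m \geq dp^r$ so that both equivalences $\Pcal_d \simeq \sn{d}-\modd$ and $\Pbold_{dp^r} \simeq \smn{dp^r}-\smod$ apply, the precomposition functor corresponds to tensoring with the $(\smn{dp^r}, \sn{d})$-bimodule $\Gamma^{d,\kk^n}(\kk^{n (r)})$. Its right adjoint is therefore $N \longmapsto \Hom_{\smn{dp^r}}(\Gamma^{d,\kk^n}(\kk^{n (r)}), N)$, and composing with the quasi-inverse $M \longmapsto \Gamma^{d,\kk^n} \otimes_{\sn{d}} M$ yields the cumbersome formula (\ref{adjointprec}) already displayed. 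Next I would perform two reductions to clean it up: full faithfulness of evaluation rewrites the inner Hom as $\Hom_{\Pbold_{dp^r}}((\Gamma^{d,\kk^n})_0^{(r)}, G)$, using that $(\Gamma^{d,\kk^n})_0^{(r)}(\kk^{n|m}) = \Gamma^{d,\kk^n}(\kk^{n (r)})$; then the assignment $V \mapsto \Hom_{\Pbold_{dp^r}}((\Gamma^{d,V})_0^{(r)}, G)$ is itself a functor $H \in \Pcal_d$ whose evaluation at $\kk^n$ is precisely the $\sn{d}$-module produced in the previous step, so the counit isomorphism $\Gamma^{d,\kk^n} \otimes_{\sn{d}} H(\kk^n) \simeq H$ delivers the stated formula.

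The one point demanding care is verifying that $V \mapsto \Hom_{\Pbold_{dp^r}}((\Gamma^{d,V})_0^{(r)}, G)$ genuinely defines a strict polynomial functor of degree $d$, so that the final reduction makes sense. This should follow because $V \mapsto \Gamma^{d,V}$ is strict polynomial of degree $d$ in $V$ via the exponential decomposition into the $\Gamma^\lambda$, precomposition with $\Ibold_0^{(r)}$ is functorial in the parameter, and the contravariant Hom out of a projective parametrised by $V$ preserves the polynomial dependence in $V$. Once this is settled, independence of the construction from the auxiliary choices of $n$ and $m$ is automatic, since neither appears in the final formula.
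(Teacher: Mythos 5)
Your proof is correct and follows essentially the same route as the paper's: transport the precomposition functor across the equivalences of Theorem \ref{equivfuncmod}, read off the right adjoint via tensor-Hom adjunction to obtain formula (\ref{adjointprec}), and then simplify using full faithfulness of evaluation together with the counit isomorphism $\Gamma^{d,\kk^n}\otimes_{\sn{d}} H(\kk^n)\simeq H$. Your closing remark flagging that $V\mapsto\Hom_{\Pbold_{dp^r}}((\Gamma^{d,V})_0^{(r)},G)$ must itself lie in $\Pcal_d$ is a point the paper leaves implicit, and it is a reasonable thing to make explicit, but it does not change the argument.
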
 
\begin{rmk}
	In order to define a classical polynomial functor, the latter space in the formula of Proposition \ref{rightadj} is to be seen as a classical vector space by forgetting the $\Zdz$-grading.
\end{rmk}

\section{The spectral sequence}
As anticipated in the introduction, the graded isomorphism we desire is meant to come from the study of a spectral sequence that we introduce in this section. Its construction is highly standard, as we see in the next proposition, but having an explicit formula for the right adjoint of the twisting functor is fundamental to extract information from it.
\begin{prop}\label{appox}
	Let $\A, \B$ be abelian $\kk$-linear categories with enough projectives and injectives and $c \negmedspace: \A \rightleftarrows \B :\negmedspace \rho$ a $\kk$-linear adjunction with $c$ exact. Call by $\Rbold^*\rho$ the right derived functor of $\rho$. Then, for all $F,G \in \A$, there is a cohomological spectral sequence 
	\begin{equation}\label{cristo}
	E_2^{s,t} = \Ext_\A ^s (F, \Rbold^t \rho (cG)) \Rightarrow \Ext^{s+t}_\B(cF, cG) \; \; \; .
	\end{equation}
\end{prop}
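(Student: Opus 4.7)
The plan is to recognise (\ref{cristo}) as an instance of the Grothendieck composition-of-functors spectral sequence. By the adjunction $c \dashv \rho$, for any $F \in \A$ the covariant Hom-functor $\Hom_\B(cF,-)$ factors as $\Hom_\A(F,-) \circ \rho$. Applying Grothendieck's theorem on derived functors of a composition to this decomposition yields, for each $G \in \A$, a cohomological spectral sequence
\begin{equation*}
	E_2^{s,t} = \Rbold^s \Hom_\A(F,-)\bigl(\Rbold^t \rho(cG)\bigr) \Rightarrow \Rbold^{s+t}\bigl(\Hom_\A(F,-) \circ \rho\bigr)(cG).
\end{equation*}
The $E_2$-terms are by definition $\Ext_\A^s(F, \Rbold^t\rho(cG))$, while the abutment equals $\Rbold^{s+t}\Hom_\B(cF,-)(cG) = \Ext_\B^{s+t}(cF, cG)$, giving exactly (\ref{cristo}).

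The hypothesis needed for Grothendieck's theorem is that $\rho$ sends injectives of $\B$ to $\Hom_\A(F,-)$-acyclics; I would establish the stronger statement that $\rho$ preserves injectives, which then works uniformly in $F$. This is a formal consequence of the exactness of $c$: for any injective $I \in \B$, the contravariant functor $\Hom_\A(-, \rho I) \simeq \Hom_\B(c-, I)$ is the composition $\Hom_\B(-, I) \circ c$, which is exact because $c$ is exact and $I$ is injective. Hence $\rho I$ is injective in $\A$, as required.

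There is no real obstacle here beyond invoking Grothendieck's theorem in the $\kk$-linear setting; the construction is entirely formal, relying only on the existence of enough injectives in $\B$, the adjunction $c \dashv \rho$, and the exactness of $c$. Exactness of $c$ is used precisely (and only) to guarantee that $\rho$ preserves injectives, which is what makes the $E_2$-page decouple into the clean form displayed in the statement; this same preservation property will also be the input that, in the sequel, allows an explicit description of $\Rbold^t\rho$ using the formula for $\rho$ obtained in Proposition \ref{rightadj}.
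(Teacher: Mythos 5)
Your argument is correct, but it takes a slightly different route from the paper's. The paper does not invoke Grothendieck's composition theorem; instead, it works directly with the bicomplex $\Hom_\B(cP_*, J^*)$, where $P_* \twoheadrightarrow F$ is a projective resolution in $\A$ and $cG \hookrightarrow J^*$ an injective coresolution in $\B$. Filtering by columns and using that $c$ is exact (so $cP_*$ resolves $cF$) together with the injectivity of each $J^t$ shows the totalization computes $\Ext^*_\B(cF,cG)$; filtering by rows and applying the adjunction $\Hom_\B(cP_*,J^*) \simeq \Hom_\A(P_*, \rho J^*)$, together with projectivity of each $P_s$, gives the stated $E_2$-page. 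Your version instead factors $\Hom_\B(cF,-) = \Hom_\A(F,-)\circ\rho$ and cites Grothendieck's theorem, with the auxiliary lemma that $\rho$ preserves injectives (which you prove correctly from the exactness of $c$). The two approaches are essentially equivalent in content — the paper's bicomplex is precisely what underlies the Grothendieck spectral sequence in this special case — but the paper's version is self-contained and avoids verifying the acyclicity hypothesis separately, whereas yours is more modular and makes explicit the injective-preservation property, which is indeed conceptually useful later. Either way the proof is sound.
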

\begin{proof}
	Let $P_*$ be a projective resolution of $F$ and $J^*$ an injective coresolution of $cG$. Consider the bicomplex $\Hom_\B (cP_*, J^*)$. Since $c$ is exact, the homology of its totalization computes $\Ext^*_\B(cF,cG)$, which is then the abutment of the associated spectral sequence as asserted. Now, since by adjunction $\Hom_\B (cP_*, J^*) \simeq \Hom_\A(P_*, \rho J^*)$, the first page of the sequence is isomorphic to $\Hom_\A(P_*, \Rbold^*\rho (cG))$. By consequence, the second page identifies to $\Ext^*_\A(F, \Rbold^*\rho (cG))$ as stated.
\end{proof}
Since it is a first quadrant spectral sequence and the $\Ext$-groups are by hypothesis $\kk$-vector spaces, there is the following standard criterion:
\begin{prop}\label{appoxxx}
The spectral sequence (\ref{cristo}) collapses at the second page if and only if there is a graded isomorphism (not necessarily natural) with respect to total degree on the right side
\[ \Ext^*_\B(cF, cG) \simeq \Ext_\A ^* (F, \Rbold^* \rho (cG)) \; \; \; .\]	
\end{prop}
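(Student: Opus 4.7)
The plan is to deduce both directions from a simple dimension count, using that we work with $\kk$-vector spaces and that the spectral sequence (\ref{cristo}) is first quadrant (it arises from the bicomplex $\Hom_\B(cP_*, J^*)$ with $P_*$ and $J^*$ both concentrated in non-negative degrees).

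For the ``only if'' direction I would assume collapse at $E_2$, i.e. $E_\infty^{s,t} = E_2^{s,t}$. The abutment $\Ext^n_\B(cF, cG)$ then carries a finite descending filtration whose successive subquotients are precisely the $E_2^{s,t}$ with $s+t = n$. Since every short exact sequence of $\kk$-vector spaces splits, I would pick such splittings and concatenate them to produce a (non-natural) isomorphism of total-degree-graded vector spaces
\[ \Ext^n_\B(cF, cG) \simeq \bigoplus_{s+t=n} \Ext^s_\A(F, \Rbold^t \rho(cG)) . \]

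For the converse my starting observation is that $E_\infty^{s,t}$ is a subquotient of $E_2^{s,t}$, hence $\dim E_\infty^{s,t} \leq \dim E_2^{s,t}$. The finite filtration on the abutment gives $\dim \Ext^n_\B(cF, cG) = \sum_{s+t=n} \dim E_\infty^{s,t}$, and combining this with the hypothesised equality $\dim \Ext^n_\B(cF, cG) = \sum_{s+t=n} \dim E_2^{s,t}$ forces $\dim E_\infty^{s,t} = \dim E_2^{s,t}$ in every bidegree. A subquotient of the same dimension as the ambient space must coincide with it, so $E_\infty^{s,t} = E_2^{s,t}$ throughout, meaning every differential $d_r$ with $r \geq 2$ vanishes. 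I do not anticipate any serious obstacle; the only small caveat is implicitly assuming each $E_2^{s,t}$ to be finite-dimensional so that the dimension count is valid, which is automatic in the intended superfunctor applications.
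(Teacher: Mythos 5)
Your argument is correct and is precisely the standard first-quadrant dimension count the paper has in mind; in fact the paper gives no proof of Proposition~\ref{appoxxx}, invoking it only as a ``standard criterion,'' and your two-step argument (split the finite filtration on the abutment using that $\kk$-vector spaces split; conversely compare $\dim E_\infty^{s,t} \leq \dim E_2^{s,t}$ against the hypothesised equality of total dimensions) is the intended proof. The caveat you flag about finite-dimensionality is genuine and not cosmetic -- the ``if'' direction really can fail when the $E_2$-terms are infinite-dimensional, since a nonzero $d_2$ with infinite-dimensional kernel and cokernel leaves every cardinal dimension unchanged -- but it is indeed automatic in the intended setting, where $\Ext$-spaces in $\Pcal_d$ and $\Pbold_d$ are finite-dimensional by the Schur (super)algebra equivalences of Theorem~\ref{equivfuncmod}.
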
 
\begin{rmk}
	When $B=\Pbold$, the criterion of Proposition \ref{appoxxx} applies in the same way even if $\Pbold$ is $\kk$-superlinear (and not abelian). That is because the $\Ext$ in $\Pbold$ are computed by passing to an abelian subcategory of $\Pbold$, see \cite{Iac}.
\end{rmk}
We now apply this machinery to our context. Let $d,r\ge0$ be fixed until the end. Choose $\A = \Pcal_d$, $\B = \Pbold_{d p^r}$, $c$ the Frobenius precomposition (\ref{prec}) and $F,G \in \Pcal$. Then the spectral sequence of Proposition \ref{appox} reads
\begin{equation}\label{twistingss}
 \boldsymbol{II}_{F,G,r}^{s,t} := \Ext_{\Pcal_d} ^s (F, \Rbold^t \rho (G_0^{(r)}) ) \Rightarrow \Ext^{s+t}_{\Pbold_{d p^r}}(F_0^{(r)}, G_0^{(r)}) \; \; \; .
\end{equation}
By Proposition \ref{rightadj}, $\Rbold^* \rho (G_0^{(r)})$ is the graded polynomial functor defined for all $G \in \Pbold_{d p^r}$ by
\[V \longmapsto \Ext^*_{\Pbold}((\Gamma^{d,V})_0^{(r)}, G_0^{(r)} ) \; .\]
%
%
%
%\begin{rmk}
%In fact the latter may be a problem (or it may not). One should perhaps write the spectral sequence as	
%%	
%	\[ \Ext_{\Pcal_d} ^p (F, \Ext^q_{(\Pbold_{d p^r})_\ev}(\Gammabold^{d,-} \circ \Ibold_0^{(r)}, G_0^{(r)} \; \oplus \; \PI \circ G_0^{(r)} ) ) \Rightarrow \Ext^{p+q}_{\Pbold_{d p^r}}(F_0^{(r)}, G_0^{(r)}) \] 
%%
%and remember that the first summand gives even extensions and the second gives odd extensions. We find a more direct method (?), maybe less general, if we replace $\Pcal_d$ by $\Pbold_d$ so that $c = \boldsymbol{\varPhi_r}$ and the sequence becomes (now $F,G$ are superfunctors)
%%
%\[ \Ext_{\Pbold_d} ^p (F, \Ext^q_{\Pbold_{d p^r}}(\Gammabold^{d,-} \circ \Ibold_0^{(r)}, G_0^{(r)}) ) \Rightarrow \Ext^{p+q}_{\Pbold_{d p^r}}(F_0^{(r)}, G_0^{(r)}) \] 
%%
%Problem: I found that this method requires to know $\Ext^*_{\Pbold}(\Lambda_0^{(r)}, \Lambda_0^{(r)})$.
%\end{rmk}
%
%\begin{conj}\label{conjsequence}
%	$ss$:
%	\begin{itemize}
%		\item The spectral sequence (\ref{twistingss}) collapses at the second page.
%		\item There is a graded isomorphism, natural in $F,G \in \Pcal$
%	%
%	\[\Ext^*_{\Pbold_{d p^r}}(F_0^{(r)}, G_0^{(r)}) \simeq  \Ext_{\Pcal_d}^* (F, \Rbold^* \rho^{(r)} G )\]
%	%
%	where on the right we take the total degree. 
%	\end{itemize}
%\end{conj}
%
Set $\Ebold{r} := \Ext^*_\Pbold(\Ibold_0^{(r)}, \Ibold_0^{(r)})$. We will prove later (Lemma \ref{isokr}) that $\Rbold^* \rho (G_0^{(r)}) \simeq G_{\Ebold{r}}$ naturally in $G$. Thus, thanks to Prop \ref{appoxxx}, we can state our conjecture:
%Thus, proving Conjecture \ref{conjsequence} yields immediately the final theorem:

\begin{conj}\label{conjsequence}
	For all $F,G \in \Pcal$ the spectral sequence (\ref{twistingss}) collapses at the second page. In particular, there is a graded isomorphism (a priori not natural)
	\[\Ext^*_{\Pbold_{d p^r}}(F_0^{(r)}, G_0^{(r)}) \simeq  \Ext_{\Pcal_d}^* (F, G_{\Ebold{r}}) \; . \]
\end{conj}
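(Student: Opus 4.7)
The conjecture breaks naturally into two independent sub-questions: identifying the $E_{2}$ page, namely $\Rbold^{*}\rho(G_{0}^{(r)}) \simeq G_{\Ebold{r}}$ (the announced Lemma~\ref{isokr}), and establishing the collapse. I would treat them in this order.

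For the first, Proposition~\ref{rightadj} identifies $\Rbold^{*}\rho(G_{0}^{(r)})(V)$ with $\Ext^{*}_{\Pbold}((\Gamma^{d,V})_{0}^{(r)}, G_{0}^{(r)})$. Resolving $G$ by an injective coresolution built from $S^{\mu}$'s reduces the question to the case $G = S^{\mu}$; then, using the exponential decomposition $\Gamma^{d,V} \simeq \bigoplus_{\lambda \in \Lambda(\dim V, d)} \Gamma^{\lambda}$ and Theorem~\ref{FSext}/Corollary~\ref{exponentialext}, the $\Ext$ splits as a tensor product of copies of pieces of $\Ebold{r}$, indexed in precisely the fashion that rebuilds $G_{\Ebold{r}}(V)$. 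Naturality in $V$ and $G$ would be checked on the generators $\Gamma^{\lambda}$, $S^{\mu}$.

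For the collapse itself, the plan is the comparison strategy sketched in the introduction. There is a classical spectral sequence of the same shape, $\Ext^{s}_{\Pcal}(F, G^{t}_{E_{r}}) \Rightarrow \Ext^{s+t}_{\Pcal}(F^{(r)}, G^{(r)})$, whose collapse is Touzé's theorem and whose abutment is governed by the classical analogue of Conjecture~\ref{but} (Theorem~\ref{classicaliso}). The even-evaluation functor $\Pbold \to \Pcal$, $F \mapsto F|_{\V}$, induces a morphism of spectral sequences from (\ref{twistingss}) to its classical counterpart; on $E_{2}$ it is induced by the inclusion of the classical Yoneda algebra $E_{r}$ into $\Ebold{r}$, and on the abutment by the restriction map $\Ext^{*}_{\Pbold}(F_{0}^{(r)}, G_{0}^{(r)}) \to \Ext^{*}_{\Pcal}(F^{(r)}, G^{(r)})$. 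I would then reduce, by repeated horseshoe arguments and Corollary~\ref{exponentialext}, to the fundamental pair $(F,G) = (\Gamma^{d,V}, S^{d}_{V})$ and establish collapse on it by dimension counting against the known structure of $\Ebold{r}$, exploiting the classical collapse to detect the even part.

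The main obstacle I expect is genuine, and seems to be exactly what forces the degree restriction in Theorem~\ref{oooh}: the classical sequence collapses because of Touzé's \emph{universal} cohomology classes, but the restriction $E_{r} \hookrightarrow \Ebold{r}$ sees only the even part, so odd contributions to $\Ebold{r}$ cannot be certified as permanent cycles by comparison alone. A full proof would need fresh input, for instance a super analogue of the universal classes that detects the odd generators, or an explicit $A_{\infty}$-model of $\Ebold{r}$ allowing one to force the higher differentials to vanish for parity reasons. Another subtle point is that the conjectured isomorphism is only graded, not natural, so any reduction via resolutions must carefully track differentials through the hypercohomology spectral sequences stacked on top of (\ref{twistingss}); this is non-trivial but should not be the principal difficulty.
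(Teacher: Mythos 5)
This statement is a conjecture: the paper never proves it in full generality, only in special cases (Corollary~\ref{appoggione}, the injective case after Proposition~\ref{adjointSd}) and in the range of degrees of Theorem~\ref{isobasdegres}. Your proposal is honest about this, and your outline of the two sub-questions matches the paper's structure: identifying the $E_2$ page (Lemma~\ref{isokr}, which the paper does indeed reduce to $G = S^d_V$ and prove via Proposition~\ref{adjointSd} and Corollary~\ref{exponentialext}), then attempting collapse by comparison with the classical sequence via $res_0$ (Proposition~\ref{reszeross}). Your diagnosis that the comparison alone cannot certify the whole sequence as degenerate and that this is precisely what produces the degree bound in Theorem~\ref{oooh} is also correct in spirit.

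Two points need correction, though. First, the map on Yoneda algebras goes the \emph{wrong way} in your description: the spectral sequence morphism $\boldsymbol{II}_{F,G,r} \to II_{F,G,r}$ is induced on the $E_2$ page by the \emph{projection} $\pi_r : \Ebold{r} \twoheadrightarrow E_r$ onto the first $2p^r-1$ degrees, not by an inclusion $E_r \hookrightarrow \Ebold{r}$. Consequently the obstruction is not really about the $\Zdz$-\emph{parity} of classes in $\Ebold{r}$: what the comparison fails to see is everything in cohomological degree $\geq 2p^r$, of either superparity, because $\pi_r$ kills it. The range $t < 2p^r$ (improved to $t < 2p^{2r-1}$ by the paper's trick of writing $F_0^{(r)} = (F^{(r-1)})_0^{(1)}$) is exactly where $G(\pi_r)_*$ is injective (Corollary~\ref{ciolino}), hence the differentials leaving $\boldsymbol{II}$ in that range vanish by comparison with the collapsing classical sequence. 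Second, your plan to "reduce by repeated horseshoe arguments to the fundamental pair $(\Gamma^{d,V}, S^d_V)$ and establish collapse there by dimension counting" does not work as stated: collapse for the pair $(\Gamma^{d,V}, S^d_V)$ is indeed immediate (the sequence is concentrated in one row), but degeneration does not propagate along resolutions of $F$ or $G$ — the spectral sequences for $(F, S^\mu)$ do not assemble into the one for $(F, G)$ in a way that preserves vanishing of differentials. So that reduction would establish the $E_2$-page identification but not the collapse; the paper explicitly leaves the general collapse open.
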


We proceed to prove the validity of Conjecture \ref{conjsequence} in some particular cases. First we treat the case $G=S^d_V$. 
%
% PROB NATURALITE
%\begin{align*}
%\Rbold^* KC_rS^d \: (V) = \Ext^*_{\Pbold_{d p^r}}((\Gamma^{d,V})_0^{(r)}, S_0^{d (r)} ) \\ \simeq \bigoplus_{\lambda \in \Lambda(n,d)} \Ext^*_{\Pbold_{d p^r}}(\Gamma_0^{\lambda (r)}, S_0^{d (r)} ) \\
%\simeq \bigoplus_{\lambda \in \Lambda(n,d)} \bigotimes_{i=1}^n \Ext^*_{\Pbold_{d p^r}}(\Gamma_0^{\lambda_i (r)}, S_0^{\lambda_i (r)} ) \\
%\simeq \bigoplus_{\lambda \in \Lambda(n,d)} \bigotimes_{i=1}^n S^{\lambda_i}(\Ebold{r})
%\simeq S^d_{\Ebold{r}}(V) 
%\end{align*}
\begin{prop}\label{adjointSd}
	There is an isomorphism of graded functors, natural in $V$: 	
	\[\Rbold^* \rho ((S^d_V)_0^{(r)}) \simeq (S^d_V)_{\Ebold{r}} \; \; . \]
\end{prop}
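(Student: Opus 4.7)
The plan is to combine the explicit formula for $\Rbold^*\rho$ from Proposition \ref{rightadj} with the exponential decompositions of $\Gamma$ and $S$ and the Kunneth-type splitting of Corollary \ref{exponentialext}, in order to reduce the proposition to a small family of elementary Ext computations.

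First, Proposition \ref{rightadj} gives, naturally in $W \in \V$,
\[ \Rbold^t \rho((S^d_V)_0^{(r)})(W) \simeq \Ext^t_\Pbold((\Gamma^{d,W})_0^{(r)}, (S^d_V)_0^{(r)}). \]
Next, I would use the exponential decomposition $\Gamma^{d,W} \simeq \bigoplus_{\lambda \in \Lambda(\dim W, d)} \Gamma^\lambda$. Since precomposition with $\Ibold_0^{(r)}$ commutes with direct sums and tensor products, this yields $(\Gamma^{d,W})_0^{(r)} \simeq \bigoplus_\lambda \bigotimes_i (\Gamma^{\lambda_i})_0^{(r)}$, and Corollary \ref{exponentialext} applied to each summand produces
\[ \Ext^*_\Pbold((\Gamma^{d,W})_0^{(r)}, (S^d_V)_0^{(r)}) \simeq \bigoplus_\lambda \bigotimes_i \Ext^*_\Pbold((\Gamma^{\lambda_i})_0^{(r)}, (S^{\lambda_i}_V)_0^{(r)}). \]

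On the target side, upon choosing a basis of $W$ I would decompose
\[ (S^d_V)_{\Ebold{r}}(W) = S^d(V^* \otimes \Ebold{r} \otimes W) \simeq \bigoplus_\lambda \bigotimes_i S^{\lambda_i}(V^* \otimes \Ebold{r}) \]
using the exponential property of the super symmetric algebra. A term-by-term comparison with the previous decomposition reduces the whole proposition to the base case
\[ \Ext^*_\Pbold((\Gamma^e)_0^{(r)}, (S^e_V)_0^{(r)}) \simeq S^e(V^* \otimes \Ebold{r}) = S^e_V(\Ebold{r}), \qquad e \ge 1. \]

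The case $e = 1$ is immediate, since $\Gamma^1 = \Ibold$ and $S^1_V = V^* \otimes \Ibold$, so both sides equal $V^* \otimes \Ebold{r}$ by the very definition of $\Ebold{r}$. The main obstacle will be establishing the base case for general $e$, which is the super analogue of the Friedlander--Suslin computation of $\Ext^*_\Pcal(\Gamma^{e(r)}, S^{e(r)})$. I plan to attack it by induction on $e$, resolving either $(\Gamma^e)_0^{(r)}$ or $(S^e_V)_0^{(r)}$ by a super Koszul-type complex whose terms are tensor products involving $(\Gamma^1)_0^{(r)}$ and smaller divided or symmetric powers, and then invoking Corollary \ref{exponentialext} to control the Ext of each term of the resolution. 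Naturality in $W$ is preserved throughout, since both the exponential decomposition of $\Gamma^{d,W}$ and the cross-product isomorphism of Corollary \ref{exponentialext} are natural in their inputs.
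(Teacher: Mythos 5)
Your reduction via the exponential decomposition of $\Gamma^{d,W}$ and Corollary \ref{exponentialext} follows the same structural outline as the paper, and you correctly isolate the base case $\Ext^*_\Pbold((\Gamma^e)_0^{(r)}, (S^e_V)_0^{(r)}) \simeq S^e_V(\Ebold{r})$ as the true content of the statement. But this is exactly where your proof stops being a proof. The base case is the super analogue of the Friedlander--Suslin computation, and the paper handles it by citing Drupieski and Kujawa \cite[Thm 5.1.2]{DrupKujawa}; your plan of ``induction on $e$ with a super Koszul-type resolution'' is a research programme, not an argument. In the classical case the resolution step alone is already delicate (the hypercohomology spectral sequence does not collapse for free, and one needs the full de Rham/Koszul machinery with twists), and nothing you write explains why the super analogue would go through. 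You should either carry this computation out in full or cite a source for it.

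There is also a genuine flaw in your naturality claim. You assert that naturality in $W$ is preserved because the exponential decomposition $\Gamma^{d,W} \simeq \bigoplus_{\lambda}\Gamma^\lambda$ and the Künneth isomorphism are ``natural in their inputs,'' but the decomposition of $\Gamma^{d,W}$ depends on a choice of basis of $W$ and is explicitly \emph{not} natural in $W$ (the paper itself notes this right after stating it). A term-by-term comparison of two basis-dependent decompositions produces an isomorphism of graded vector spaces for each $W$, but not a morphism of functors. The paper circumvents this by first constructing a natural map $\eta_{V,W} : (\Ebold{r}\otimes V\otimes W)^{\otimes d}\rightarrow \Ext^*_\Pbold((\Gamma^{d,W})_0^{(r)},(S^d_V)_0^{(r)})$ via cup product, and only then checking objectwise (after choosing a basis) that it factors through $S^d$ as an isomorphism. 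Your sketch omits the construction of any canonical comparison map, so even granting the base case you would not obtain an isomorphism of functors, let alone one natural in $V$.
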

\begin{proof}
	Consider the map
	\[\eta_{V,W} : (\Ebold{r}\otimes V \otimes W)^{\otimes d} \simeq (\Ext_{\Pbold_d}^*(\Ibold_0^{(r)} \otimes W^*, \Ibold_0^{(r)} \otimes V))^{\otimes d} \rightarrow \Ext_{\Pbold_d}^*((\Gamma^{d,W})_0^{(r)}, (S^d_V)_0^{(r)}) \]
	induced by cup product, thus natural in $V$ and $W$. We want to prove that it factors through $S^d(\Ebold{r}\otimes V \otimes W)$ and that the factored map is an isomorphism. The proof of this is made in steps.
	\begin{itemize}
		\item[\textit{Step 1:}] proof for $W=\kk$. Call $\eta_V = \eta_{V, \kk}$. By naturality, we may choose a basis of $V$ and decompose $\eta_V$ into a direct sum to prove the assertion. The source decomposes as
	\[ (\Ebold{r}\otimes V)^{\otimes d} \simeq \bigoplus_{\lambda \in \Lambda(dim(V),d)} \Ebold{r}^\lambda\]
	where each $\Ebold{r}^\lambda$ denotes a copy of $\Ebold{r}^{\otimes d}$ (we keep the index $\lambda$ for clearness). The target decomposes as
	\[\begin{array}{c c c}
	\Ext_{\Pbold_d}^*(\Gamma_0^{d (r)}, (S^d_V)_0^{(r)}) & \simeq & \bigoplus\limits_{\lambda \in \Lambda(dim(V),d)} \Ext_{\Pbold_d}^*(\Gamma_0^{d(r)}, S_0^{\lambda  (r)}) \\ \\
	& \simeq & \bigoplus\limits_{\lambda \in \Lambda(dim(V),d)} \: \bigotimes_i \Ext_{\Pbold_d}^*(\Gamma_0^{\lambda_i(r)}, S_0^{\lambda_i (r)})
	\end{array}\]
	where the second isomorphism is given by Corollary \ref{exponentialext}. Call $\eta_\lambda$ the restriction of $\eta_V$ at the $\lambda$-factor. It is then clear that $\eta_V = \bigoplus_\lambda \eta_\lambda$ and that each $\eta_\lambda$ maps onto the corresponding $\lambda$-factor on the target. Now, we know from the computations of Drupieski and Kujawa \cite[Thm 5.1.2]{DrupKujawa} that $\eta_{\mathbb{K}}$ factors into the desired isomorphism. Since each $\eta_\lambda$ identifies with a tensor power of $\eta_\kk$, the same statement holds for all $\eta_\lambda$, and thus for $\eta_V$.
	\item[\textit{Step 2:}] we prove that, for all composition $\lambda$ of lenght $n$, there is a graded isomorphism $S^\lambda(\Ebold{r}\otimes V) \simeq \Ext_{\Pbold_d}^*(\Gamma_0^{\lambda (r)}, (S^d_V)_0^{(r)})$ induced by $\eta^{\otimes n}$, cross product and multiplication on $S^*$. Indeed, Step 1 provides such isomorphism for $\lambda=(d)$. Generalisation to any composition comes from Corollary \ref{exponentialext}.
	\item[\textit{Step 3:}] proof for general $W$. Since $\eta_{V,W}$ is natural in $W$ as well, one may again choose a basis of $W$ to carry out the proof. Then $\eta_{V,W}$ decomposes as the sum of the restrictions
	\[ \eta_V^\lambda : \: \:  (\Ebold{r} \otimes V)^\lambda \longrightarrow  \Ext_{\Pbold_d}^*(\Gamma_0^{\lambda (r)}, (S^d_V)_0^{(r)})\]  
	for $\lambda$ who ranges through $\Lambda(dim(W), d)$. In fact, $\eta_V^\lambda$ is the same morphism described in Step 2. We have proved that, for all $\lambda$, it factors through $S^\lambda(\Ebold{r}\otimes V)$ into an isomorphism. It follows that so does $\eta_{V,W}$.
	\end{itemize}
\end{proof}
%\footnote{If this argument is not strong enough, see another proof at the end of the document}
%
%\begin{cor}
%	There is a graded isomorphism $\Rbold^* \rho ((S^d_V)_0^{(r)}) \simeq (S^d_V)_\Ebold{r}$ natural in $V$.
%\end{cor}
%\begin{proof}
%	Consider a natural isomorphism $\vartheta : S^d_\Ebold{r} \xrightarrow{\simeq} \Rbold^* \rho (S_0^{d (r)})$ provided by Proposition \ref{adjointSd}. Parametrising it with $V$ we get a natural map
%	\[  \]
%	
%\end{proof}

%
Denote by $t$ the grading on the parametrised functor $(S^d_V)_{\Ebold{r}}$. Thanks to Proposition \ref{adjointSd}, the second page of (\ref{twistingss}) for $G=S^d_V$ becomes
\[ \boldsymbol{II}_{F, S^d_V,r}^{s,t} = \Ext_{\Pcal_d}^s (F, (S^d_{V \otimes \Ebold{r}})^t) \Rightarrow \Ext^{s+t}_{\Pbold_{d p^r}}(F_0^{(r)}, (S^d_V)_0^{(r)}) \: . \]
Since $S^d_{V \otimes \Ebold{r}}$ is injective in each degree, the sequence is concentrated in a row and hence collapses at the second page. This proves Conjecture \ref{conjsequence} when $G$ is an injective cogenerator. So there exists a graded isomorphism
\begin{equation}\label{appiso}
 \Ext^*_{\Pbold_{d p^r}}(F_0^{(r)}, (S^d_V)_0^{(r)}) \simeq \Hom_{\Pcal_d}(F, S^d_{\Ebold{r} \otimes V}) \; 
\end{equation}
which is actually natural in $F$ and $V$, since in a sequence with just one row there is no filtration obstructing naturality. We use this isomorphism to prove the general identification for $\Rbold^* (\rho \circ c)$.

\begin{lemma}\label{isokr}
	There is a natural graded isomorphism of functors $\Rbold^* \rho (G_0^{(r)}) \simeq G_{\Ebold{r}} \: .$
\end{lemma}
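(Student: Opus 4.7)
The plan is to bootstrap the case $G=S^d_V$ (Proposition \ref{adjointSd}) to arbitrary $G \in \Pcal_d$ via an injective coresolution and a hypercohomology spectral sequence. First I would pick a functorial injective coresolution
\[ 0 \longrightarrow G \longrightarrow J^0 \longrightarrow J^1 \longrightarrow \cdots \]
of $G$ in $\Pcal_d$, with each $J^i$ a finite direct sum of cogenerators $S^d_V$ (one can arrange this: under the equivalence $\Pcal_d \simeq \sn{d}\text{-}\modd$ of Theorem \ref{equivfuncmod}, $G$ corresponds to a finitely generated module over the finite-dimensional Schur algebra, hence admits an injective coresolution by finite direct sums of indecomposable injectives). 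Applying the exact precomposition $c$ yields an exact sequence $0\to G_0^{(r)}\to (J^0)_0^{(r)}\to (J^1)_0^{(r)}\to\cdots$ in $\Pbold_{dp^r}$.

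The objects $(J^i)_0^{(r)}$ are in general \emph{not} $\rho$-acyclic, but the hypercohomology spectral sequence associated to this resolution,
\[ E_2^{p,q}= H^p\!\left( \Rbold^q \rho \bigl((J^*)_0^{(r)}\bigr)\right) \Longrightarrow \Rbold^{p+q}\rho (G_0^{(r)}) \: , \]
replaces direct $\rho$-acyclicity. Since $\rho$ is a right adjoint, it preserves the finite direct sums appearing in each $J^i$, and the same holds for $\Rbold^q\rho$ as a $\delta$-functor. Combining this with Proposition \ref{adjointSd}, one identifies
\[ \Rbold^q \rho\bigl((J^i)_0^{(r)}\bigr) \: \simeq \: (J^i_{\Ebold{r}})^q \]
naturally. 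Hence the $q$-th row of the $E_1$-page is the complex $(J^*_{\Ebold{r}})^q$.

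The spectral sequence then collapses because, for each fixed $q$, the functor $G \mapsto (G_{\Ebold{r}})^q$ on $\Pcal_d$ is exact: it is the degree-$q$ component of $G \mapsto G(\Ebold{r} \otimes -)$, and exactness in $\Pcal_d$ is detected pointwise (the graded decomposition of $G$ evaluated at $\Ebold{r}\otimes V$ is compatible with short exact sequences in $G$). Consequently $(J^*_{\Ebold{r}})^q$ is an exact resolution of $(G_{\Ebold{r}})^q$, so
\[ E_2^{p,q}= \begin{cases} (G_{\Ebold{r}})^q & p=0 \\ 0 & p>0 \: . \end{cases} \]
Concentration on the column $p=0$ forces the collapse, and the sequence converges to the desired graded isomorphism $\Rbold^* \rho(G_0^{(r)})\simeq G_{\Ebold{r}}$ without any extension issue. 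Naturality in $G$ follows from the functoriality of the chosen injective coresolution together with the naturality established in Proposition \ref{adjointSd}.

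The main obstacle I anticipate is the identification in the second paragraph: verifying that $\Rbold^q\rho$ commutes with the direct sums in each $J^i$ and matches the $q$-th graded piece of $(J^i)_{\Ebold{r}}$ compatibly with the differentials of the coresolution. This is where the finite-dimensional reduction matters, as it lets one reuse the naturality statement of Proposition \ref{adjointSd} without having to revisit convergence questions for the possibly infinite-dimensional graded algebra $\Ebold{r}$.
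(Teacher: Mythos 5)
Your approach differs substantially from the paper's, which dispatches the lemma in one line by Kuhn duality: starting from the definition $\Rbold^*\rho(G_0^{(r)})(V)=\Ext^*_\Pbold((\Gamma^{d,V})_0^{(r)}, G_0^{(r)})$, dualise to $\Ext^*_\Pbold((G^\#)_0^{(r)}, (S^d_V)_0^{(r)})$, apply the natural isomorphism~(\ref{appiso}) in the variable $F=G^\#$, and finish with Yoneda to land on $G(\Ebold{r}\otimes V)$. This route is shorter and, crucially, inherits naturality in $G$ automatically from the naturality in $F$ of~(\ref{appiso}).

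Your proposed proof has a genuine gap precisely at the point you flag yourself. To make the $E_1$-page of your hypercohomology spectral sequence equal, as a \emph{complex}, to $(J^*_{\Ebold{r}})^q$, you need the identifications $\Rbold^q\rho\bigl((J^i)_0^{(r)}\bigr)\simeq(J^i_{\Ebold{r}})^q$ to commute with the coresolution differentials $\delta^i\colon J^i\to J^{i+1}$. These differentials are arbitrary morphisms between injectives, i.e., elements of $\Gamma^d\Hom(V,V')$, whereas Proposition~\ref{adjointSd} (and the induced isomorphism~(\ref{appiso})) are only stated to be natural with respect to \emph{linear maps} $V\to V'$ of the parametrising spaces. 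Naturality along the non-full subcategory generated by such maps does not imply naturality along all of $\Hom_\Pcal(S^d_V,S^d_{V'})$. Without this, you cannot conclude that $E_2^{p,q}=H^p\bigl((J^*_{\Ebold{r}})^q\bigr)$, so the claimed vanishing for $p>0$ — and hence the collapse — is unsupported. The same issue blocks naturality of the resulting isomorphism in $G$. (Your remark that the concern is about finite-dimensionality of $\Ebold{r}$ misidentifies the obstacle: the issue is naturality with respect to morphisms of injectives, not convergence.) The parts of your argument that do go through are the exactness of $G\mapsto(G_{\Ebold{r}})^q$, the additivity of $\Rbold^q\rho$, and the formal convergence of the spectral sequence; it is the identification of the rows of the $E_1$-page that is not justified by the tools you invoke.
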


%\begin{proof}(try n. 1, probably wrong)
%	 Precomposition of a functor by $V \otimes -$ is right adjoint to precomposition by $\Hom(V, -)$. Using Proposition \ref{frobffaith} and exactness of $\varPhi_r$, one sees that the same holds for $(V \otimes -) \circ \Ibold_0^{(r)}$ and $\Hom(V,-)\circ \Ibold_0^{(r)}$. So we have
%%
% \begin{align*}
% \Ext^*_{\Pbold_{d p^r}}((\Gamma^{d,V})_0^{(r)}, G_0^{(r)}) \simeq \Ext^*_{\Pbold_{d p^r}}(G^\# \circ \Ibold_0^{(r)} , (S^d_V)_0^{(r)}) \\
% \simeq \Ext^*_{\Pbold_{d p^r}}(G^\# \circ \Hom(V,-) \circ \Ibold_0^{(r)}, S_0^{d (r)}) \\ \simeq G(V \otimes \Ebold{r}^*)
% \end{align*}
%\end{proof}
\begin{proof} 
	Dualise and use (\ref{appiso}) to get
	\begin{align*}
	\Rbold^* \rho (G_0^{(r)}) \: (V) = \Ext^*_{\Pbold_{d p^r}}((\Gamma^{d,V})_0^{(r)}, G_0^{(r)}) \simeq \Ext^*_{\Pbold_{d p^r}}((G^\#) _0^{(r)}, (S^d_V)_0^{(r)}) \\  \simeq \Hom_{\Pcal_d}(G^\#, S^d_{\Ebold{r}\otimes V}) \simeq G(\Ebold{r} \otimes V)
	\end{align*}
	natural in either variable, which gives the result. 
\end{proof}
% 
%Now Theorem \ref{mainthm} follows from Lemma \ref{isokr} "by taking cohomology"? (as in Chalupnik, Derived Kan extensions, Cor 3.7).
This indicates us a class of functors pairs for which Conjecture \ref{conjsequence} is true:
\begin{cor}\label{appoggione}
	Let $F,G \in \Pcal$ be such that $F$ is $G_{\Ebold{r}}$-acyclic. Then there is a graded isomorphism, natural in $F,G$:
	\[\Ext^*_{\Pbold_{d p^r}}(F_0^{(r)}, G_0^{(r)}) \simeq  \Hom_{\Pcal_d} (F, G_{\Ebold{r}}) \; .\]
\end{cor}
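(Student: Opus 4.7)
The plan is to extract the corollary as a direct consequence of the twisting spectral sequence (\ref{twistingss}) together with the identification provided by Lemma \ref{isokr}. With these in hand the acyclicity hypothesis collapses the spectral sequence in a very strong way, so nothing beyond standard spectral-sequence bookkeeping is needed.

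First, I would write down the $E_2$ page of (\ref{twistingss}) and substitute the formula of Lemma \ref{isokr}, obtaining
\[ \boldsymbol{II}_{F,G,r}^{s,t} \simeq \Ext_{\Pcal_d}^s(F, (G_{\Ebold{r}})^t) \Rightarrow \Ext^{s+t}_{\Pbold_{d p^r}}(F_0^{(r)}, G_0^{(r)}), \]
where $(G_{\Ebold{r}})^t$ denotes the degree-$t$ component of the graded parametrised functor $G_{\Ebold{r}}$. The $G_{\Ebold{r}}$-acyclicity hypothesis on $F$ translates exactly into $\Ext_{\Pcal_d}^s(F, (G_{\Ebold{r}})^t) = 0$ for every $s > 0$ and every $t$. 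Consequently the $E_2$ page is concentrated in the single column $s = 0$.

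The next step is standard: a first-quadrant spectral sequence supported on a single column has no room for nontrivial differentials (the target of any $d_k$ out of $E_k^{0,t}$ lies in some $E_k^{k, t-k+1} = 0$), so it degenerates at $E_2$. The abutment is therefore read off directly, with no filtration steps to untangle, and one obtains a graded isomorphism
\[ \Ext^*_{\Pbold_{d p^r}}(F_0^{(r)}, G_0^{(r)}) \simeq \bigoplus_t \Hom_{\Pcal_d}(F, (G_{\Ebold{r}})^t) \simeq \Hom_{\Pcal_d}(F, G_{\Ebold{r}}) \]
respecting the total degree on the right and the cohomological degree on the left.

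Finally, I would check naturality in $F$ and $G$. The spectral sequence (\ref{twistingss}) is natural in both variables by its construction in Proposition \ref{appox}, and the identification of Lemma \ref{isokr} is natural in $G$. Because the abutment filtration collapses to a single nonzero piece, the edge isomorphism is literally the identification of $E_\infty^{0,*}$ with the full abutment, so no filtration obstruction can spoil naturality. The only potentially subtle point is ensuring that ``single-column concentration'' really does yield a natural isomorphism and not merely an abstract one, but this is immediate from the fact that the natural edge map $\Ext^*_{\Pbold}(F_0^{(r)}, G_0^{(r)}) \twoheadrightarrow E_\infty^{0,*} \hookrightarrow E_2^{0,*}$ is then bijective. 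This is the only step requiring any care, and it is routine.
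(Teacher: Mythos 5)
Your proof is correct and is exactly the argument the paper leaves implicit (the corollary appears with no written proof, the necessary pieces having just been assembled): plug the acyclicity hypothesis into the $E_2$-page via Lemma~\ref{isokr}, observe the sequence is concentrated in the column $s=0$ and so degenerates, and read off a natural isomorphism from the edge map since the filtration on the abutment is trivial. This mirrors the paper's own earlier treatment of the case $G=S^d_V$, where the sequence also concentrates in a single line and it is noted that a one-line sequence has no filtration obstructing naturality.
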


\begin{ex} Let $\lambda$ be a partition of $d$. Consider the \emph{Schur functor} $S_{\lambda}$ and the \emph{Weyl functor} $W_\lambda$, which are the Kuhn dual of each other. It is known (see for example \cite{Chal} or \cite[Ex. 4.8]{TouzeCoh})
that $S_\lambda$ is $(W_\mu)_V$-acyclic for all vector space $V$ and for any couple of partitions $\lambda, \mu$. Then by Corollary \ref{appoggione} we can compute:
	\[ \Ext^*_{\Pbold_{d p^r}}((W_\lambda)_0^{(r)}, (S_\mu)_0^{(r)}) \simeq  \Hom_{\Pcal_d} (W_\lambda, (S_\mu)_{\Ebold{r}}) \: .
	 \]
\end{ex}

\section{Proof of the conjecture in low degrees}

Throughout the rest of the paper $F,G$ are two strict polynomial functors of degree $d$. To lighten notation, we drop the subscripts from $\Pcal$ and $\Pbold$. We are going to prove the first part of Theorem \ref{oooh}, that is, a low-degree version of Conjecture \ref{conjsequence} which in exchange gains naturality:

\begin{thm}\label{isobasdegres}
	For all $n < 2p^{2r-1}$, the spectral sequence (\ref{twistingss}) induces an isomorphism, natural in $F,G$: 
	\[ \Ext^n_\Pbold(F_0^{(r)}, G_0^{(r)}) \simeq  \bigoplus_{s+t=n} \Ext_\Pcal^s (F, (G_{\Ebold{r}})^t) \: \: . \]
\end{thm}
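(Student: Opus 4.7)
The plan is to combine Touz\'e's classical collapse theorem with a low-degree comparison of Yoneda algebras, forcing the super spectral sequence to degenerate through a dimension count.

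First, I would set up a restriction-embedding pair between $\Pcal$ and $\Pbold$. The fully faithful embedding $\iota : \Pcal \hookrightarrow \Pbold$ sending a classical functor to its purely-even view admits a retraction $\rho : \Pbold \to \Pcal$ given by evaluating a superfunctor on purely-even super-spaces and extracting the even part. Both $\iota$ and $\rho$ are exact, and $\rho \circ \iota = \Id_\Pcal$. Since $\iota(F^{(r)}) = F_0^{(r)}$, these functors induce on $\Ext$-groups a natural split injection
\[
\iota_* : \Ext^n_\Pcal(F^{(r)}, G^{(r)}) \hookrightarrow \Ext^n_\Pbold(F_0^{(r)}, G_0^{(r)})
\]
with natural retraction $\rho_*$; functoriality at the Yoneda-$\Ext$ level is immediate from the exactness of $\iota$ and $\rho$ together with the identity $\rho \circ \iota = \Id_\Pcal$.

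Next, I would invoke the classical counterpart of Conjecture \ref{conjsequence} (Theorem \ref{classicaliso}, due to Touz\'e \cite{TouzeUnivSS}), which provides a natural graded isomorphism
\[
\Ext^n_\Pcal(F^{(r)}, G^{(r)}) \simeq \bigoplus_{s+t=n} \Ext^s_\Pcal(F, (G_{E_r})^t),
\]
where $E_r := \Ext^*_\Pcal(\Ibold^{(r)}, \Ibold^{(r)})$ denotes the classical Yoneda algebra. Coupled with the Drupieski-Kujawa computation \cite{DrupKujawa} of $\Ebold{r}$, which identifies $E_r$ as a graded subalgebra agreeing with $\Ebold{r}$ in degrees strictly below $2p^{2r-1}$, one obtains for every $n < 2p^{2r-1}$:
\[
\Ext^n_\Pcal(F^{(r)}, G^{(r)}) \simeq \bigoplus_{s+t=n} \Ext^s_\Pcal(F, (G_{\Ebold{r}})^t) = \bigoplus_{s+t=n} \boldsymbol{II}_{F,G,r}^{s,t}.
\]

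Finally, a dimension count closes the argument. The spectral sequence (\ref{twistingss}) provides the standard upper bound $\dim \Ext^n_\Pbold(F_0^{(r)}, G_0^{(r)}) \leq \sum_{s+t=n} \dim \boldsymbol{II}_{F,G,r}^{s,t}$, while the split injection $\iota_*$ yields the lower bound $\dim \Ext^n_\Pbold(F_0^{(r)}, G_0^{(r)}) \geq \dim \Ext^n_\Pcal(F^{(r)}, G^{(r)})$. In the range $n < 2p^{2r-1}$ these two bounds coincide by the previous paragraph, so equality must hold throughout: the super sequence collapses at $E_2$, the filtration on its abutment is completely split, and $\iota_*$ is itself an isomorphism. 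Composing $\iota_*^{-1}$ with Touz\'e's natural iso, then with the identification $(G_{E_r})^t \simeq (G_{\Ebold{r}})^t$ valid in low degrees, yields the desired natural direct-sum decomposition. The principal technical obstacle I foresee is the low-degree identification of the Yoneda algebras: this amounts to extracting from \cite{DrupKujawa} the precise location of the first super-only class in $\Ebold{r}$ and checking it coincides with the stated bound $2p^{2r-1}$. Once that input is in hand, the remaining steps are a clean formal consequence.
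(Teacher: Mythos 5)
Your argument has two genuine gaps, either of which is fatal for the range $n < 2p^{2r-1}$ as stated.

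The first is the proposed embedding $\iota:\Pcal\hookrightarrow\Pbold$ with exact retraction $\rho$ and $\rho\circ\iota=\Id_\Pcal$, which is what furnishes your split injection $\iota_*$ and hence the lower bound in the dimension count. The natural candidate for $\iota$ is $F\mapsto F\circ\Ibold_0$, but $\Ibold_0$ (the $r=0$ case of $\Ibold_0^{(r)}$) is \emph{not} a strict polynomial superfunctor: the assignment $f\mapsto f_{00}$ on morphisms fails to be functorial, because $(g\circ f)_{00}$ picks up the cross-term $g_{01}\circ f_{10}$. This is precisely why the paper has to cite a nontrivial result for the existence of $\Ibold_0^{(r)}$ \emph{only for $r\geq 1$}, and why the paper never asserts any such split injection $\Ext^*_\Pcal(F^{(r)},G^{(r)})\hookrightarrow\Ext^*_\Pbold(F_0^{(r)},G_0^{(r)})$. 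The paper instead goes the opposite way, via the exact restriction functor $res_0:\Pbold\to\Pcal$ (your $\rho$), and builds a morphism of spectral sequences out of it (Proposition~\ref{reszeross}), then forces collapse of $\boldsymbol{II}$ by showing $res_0$ is an isomorphism on second pages in the relevant range rather than by a dimension count.

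The second gap is the claim that $E_r$ and $\Ebold{r}$ agree as graded algebras in all degrees $<2p^{2r-1}$. According to the Drupieski--Kujawa computation, as reflected in the paper's description of $\pi_r:\Ebold{r}\to E_r$ as the projection onto degrees $<2p^r$, the two agree only in degrees $<2p^r$, with the first super-only class of $\Ebold{r}$ sitting in degree $2p^r$. For $r=1$ the bounds $2p^r$ and $2p^{2r-1}$ coincide, but for $r\geq 2$ one has $2p^r<2p^{2r-1}$, so your identification $(G_{E_r})^t\simeq(G_{\Ebold{r}})^t$ simply fails once $t$ reaches $2p^r$. This is exactly why the paper does not compare $\boldsymbol{II}_{F,G,r}$ with $II_{F,G,r}$ directly: it re-parametrises, replacing the pair $(F,G,r)$ by $(F^{(r-1)},G^{(r-1)},1)$, which leaves the abutment $\Ext^*_\Pbold(F_0^{(r)},G_0^{(r)})$ unchanged but rescales the grading on the parametrised functor by a factor of $p^{2(r-1)}$ (Lemma~\ref{ooooh}), pushing the comparison range up from $2p^r$ to $2p^{2r-1}$. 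Without that re-parametrisation, the best your argument could hope to reach is $n<2p^r$, which falls short of the theorem's stated range for all $r\geq 2$.
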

%
%Always in the same mindset, we want to prove the partial collapsing of $\boldsymbol{II}_{F,G,r}^{*,*}$ in the total degrees mentioned in the theorem; this implies the isomorphism by Proposition \ref{appoxxx} in such degrees. We are going to do that without actually having to study the differentials. 
As anticipated in the introduction, the idea is to compare the spectral sequence with the classical one of same type. More precisely, this sequence is given for all $F,G \in \Pcal$ by Proposition \ref{appox} applied to the classical groundset:
\begin{equation}\label{ssclassique}
 II_{F,G,r}^{s,t} := \Ext_\Pcal ^s (F, (G_{E_r})^t ) \Rightarrow \Ext^{s+t}_\Pcal(F^{(r)}, G^{(r)}) \end{equation}
where $E_r := \Ext_\Pcal^*(I^{(r)}, I^{(r)})$ is the classical Yoneda algebra. It is known that this sequence collapses \cite[Cor. 5]{TouzeUnivSS}, but there is a stronger result that the reader can find for example in \cite[Cor. 3.7]{DerivedKan}:
\begin{thm}\label{classicaliso}
	There is a graded natural isomorphism $\Ext_\Pcal^*(F^{(r)}, G^{(r)}) \simeq \Ext_\Pcal^*(F, G_{E_r}) \: .$
\end{thm}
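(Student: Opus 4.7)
The plan is to mirror, within the classical polynomial functor category $\Pcal$, the right-adjoint machinery developed in Sections 3 and 4 for $\Pbold$, and then combine the resulting spectral sequence with Touzé's universal classes to upgrade collapse at $E_2$ to a natural isomorphism.

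First, I would establish the classical analogues of Proposition \ref{rightadj}, Proposition \ref{adjointSd} and Lemma \ref{isokr}. The Schur-algebra equivalence \cite[Thm 3.2]{FS}, applied exactly as in the proof of Proposition \ref{rightadj}, identifies the right adjoint $\rho$ of the classical Frobenius precomposition $c = -\circ I^{(r)} : \Pcal_d \to \Pcal_{dp^r}$ with $G \mapsto \Hom_\Pcal((\Gamma^{d,-})^{(r)}, G)$. Its derived functors on the injective cogenerators $S^d_V$ are then obtained by the three-step argument of Proposition \ref{adjointSd}; the role played in the super setting by Drupieski-Kujawa's computation is now played by the original Friedlander-Suslin theorem \cite[Thm 4.5 and Cor. 4.10]{FS}, which provides the base case $\Ext^*_\Pcal(I^{(r)}, (S^d)^{(r)}) \simeq S^d(E_r)$. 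The exponential property of $\Gamma$ and a basis argument propagate the isomorphism to arbitrary compositions $\lambda$ and finite-dimensional $W$, yielding a natural identification $\Rbold^*\rho((S^d_V)^{(r)}) \simeq (S^d_V)_{E_r}$. A Kuhn-dualisation step verbatim as in Lemma \ref{isokr} then extends this to a natural isomorphism $\Rbold^*\rho(G^{(r)}) \simeq G_{E_r}$ for all $G \in \Pcal$.

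Next, Proposition \ref{appox} applied to the classical adjunction $(c,\rho)$ produces the spectral sequence (\ref{ssclassique}) with second page $\Ext^s_\Pcal(F, (G_{E_r})^t)$, natural in both variables, converging to $\Ext^{s+t}_\Pcal(F^{(r)}, G^{(r)})$. By \cite[Cor. 5]{TouzeUnivSS} this spectral sequence collapses at $E_2$, and invoking Proposition \ref{appoxxx} already yields the graded isomorphism of the statement modulo naturality.

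The remaining and most delicate task is upgrading abstract collapse to a \emph{natural} isomorphism: degeneration at $E_2$ only identifies associated gradeds, and a natural splitting of the filtration on $\Ext^*_\Pcal(F^{(r)}, G^{(r)})$ is needed. I would construct this splitting by exhibiting an explicit natural comparison morphism $\Ext^*_\Pcal(F, G_{E_r}) \to \Ext^*_\Pcal(F^{(r)}, G^{(r)})$ built from Touzé's universal $\Ext$-classes by parametrisation, cup product and Yoneda composition; the collapse of the spectral sequence then forces this morphism to be an isomorphism in each total degree, and its construction forces naturality in $F$ and $G$. This is essentially the derived-Kan-extension perspective of \cite[Cor. 3.7]{DerivedKan} and is the main obstacle of the proof: without an explicit natural realisation of the identification one remains with a merely abstract, non-natural isomorphism of graded vector spaces, which would not suffice for the comparison argument envisioned for Theorem \ref{oooh}.
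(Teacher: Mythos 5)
The paper does not actually prove this statement: Theorem \ref{classicaliso} is quoted from the literature, with the collapse of the classical twisting spectral sequence attributed to \cite[Cor. 5]{TouzeUnivSS} and the stronger \emph{natural} isomorphism to \cite[Cor. 3.7]{DerivedKan}. Your outline is therefore being measured against those references rather than against an argument in the text. As a reconstruction of that literature it is accurate: the classical analogues of Proposition \ref{rightadj}, Proposition \ref{adjointSd} and Lemma \ref{isokr} go through exactly as you describe (modulo a small slip: the base case is $\Ext^*_\Pcal(\Gamma^{d(r)},(S^d)^{(r)})\simeq S^d(E_r)$, not $\Ext^*_\Pcal(I^{(r)},(S^d)^{(r)})$, the latter only making sense for $d=1$), and you correctly identify that degeneration at $E_2$ plus Proposition \ref{appoxxx} gives only an abstract graded isomorphism, whereas the theorem asserts naturality.

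The genuine gap is that the step you yourself flag as ``the remaining and most delicate task'' is not carried out but merely named. The construction of a natural comparison morphism $\Ext^*_\Pcal(F,G_{E_r})\to\Ext^*_\Pcal(F^{(r)},G^{(r)})$ rests on the existence of Touz\'e's universal extension classes, which is itself a substantial theorem, and even granting those classes one must still verify that the resulting map is compatible with the filtration coming from the spectral sequence and induces an isomorphism on each associated graded piece (typically via reduction to $F=\Gamma^\lambda$, $G=S^\mu_V$ and a dimension count against the known $E_2$-page). Writing ``the collapse of the spectral sequence then forces this morphism to be an isomorphism'' skips exactly this verification. In other words, your proposal correctly locates where the content of \cite[Cor. 3.7]{DerivedKan} lives but does not reproduce it; as a self-contained proof it is incomplete, and within the economy of this paper the honest move is the one the author makes, namely to cite the result.
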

In order to compare $\boldsymbol{II}^{*,*}$ and $II^{*,*}$ we will make use of a restriction morphism. Set $i_0 : \V \rightarrow \sV, \: i_0(V) = V \oplus 0$ and $u : \sV \rightarrow \V$ the functor who forgets the $\Zdz$-grading.

\begin{defin}
	If $F \in \Pbold$, define $res_0 F := u \circ F \circ i_0.$ This operation defines an exact functor $\Pbold \longrightarrow \Pcal \: .$
\end{defin}

In words, restricting a superfunctor means just to evaluate it on purely even spaces and consider the result as an ungraded space. It is clear that $res_0$ maps $\Pbold_d$ onto $\Pcal_d$.

\begin{lemma}\label{reszeroprop}
	\begin{enumerate}
		\item\label{super} For all $G \in \Pbold, \; \;$ $res_0(G_0^{(r)}) = (res_0 G)^{(r)}$.
		\item\label{classic} For all $F \in \Pcal, \; \;$ $res_0(F_0^{(r)}) = F^{(r)}.$
		\item\label{tensor} For all $G,H \in \Pbold$, $res_0(G\otimes H) \simeq res_0(G) \otimes res_0(H)$.
	\end{enumerate}
\end{lemma}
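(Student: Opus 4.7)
The plan is to unwind the definitions: all three statements are essentially formal consequences of how $\Ibold_0^{(r)}$, $i_0$, $u$ and the super tensor product interact. Two elementary observations suffice. \emph{First:} for any $V \in \V$, $\Ibold_0^{(r)}(i_0(V)) = i_0(V^{(r)})$ as super spaces, since $i_0(V) = V \oplus 0$ has trivial odd part, so the even Frobenius twist sends it to $V^{(r)} \oplus 0$. \emph{Second:} the forgetful functor $u : \sV \to \V$ preserves tensor products; i.e.\ the underlying vector space of a super tensor product $A \otimes B$ is canonically $u(A) \otimes u(B)$.

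For (\ref{super}), I would evaluate at $V \in \V$ and chain the definitions:
\[ res_0(G_0^{(r)})(V) = u\bigl(G(\Ibold_0^{(r)}(V \oplus 0))\bigr) = u\bigl(G(i_0(V^{(r)}))\bigr) = (res_0 G)(V^{(r)}) = (res_0 G)^{(r)}(V), \]
each step being functorial in $V$. Part (\ref{classic}) is handled by the same chain, noting that for $F \in \Pcal$ precomposed via $\Ibold_0^{(r)}$ in the sense of \cite[Cor. 3.8]{Iac}, the outer $u$ plays no role because $F$ already outputs ungraded spaces. Part (\ref{tensor}) follows from the second observation:
\[ res_0(G \otimes H)(V) = u\bigl(G(i_0(V)) \otimes H(i_0(V))\bigr) \simeq u(G(i_0(V))) \otimes u(H(i_0(V))) = res_0(G)(V) \otimes res_0(H)(V). \]

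There is no serious obstacle here; the three claims amount to careful bookkeeping of definitions. The only mild point requiring attention is to verify that these identifications are compatible with the strict polynomial structure on morphisms, which is automatic because $i_0$ and $u$ are $\kk$-(super)linear and send strict polynomial morphisms to strict polynomial morphisms of the same degree. Naturality of the isomorphism in (\ref{tensor}) is then inherited from the canonical monoidal structure of $u$.
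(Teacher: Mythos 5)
Your proof is correct and follows essentially the same route as the paper: evaluate at $V \in \V$, use the fact that $\Ibold_0^{(r)}$ sends $V \oplus 0$ to $V^{(r)} \oplus 0$, and chain the definitions, with (\ref{tensor}) handled by the pointwise definition of the tensor product of functors together with the monoidality of the forgetful functor $u$. The only cosmetic difference is that you make the intermediate observations explicit, whereas the paper simply writes out the chain of equalities and declares the other parts analogous or immediate.
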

\begin{proof}
	For (\ref{super}), evaluate on $V \in \V$ to have $(res_0(G_0^{(r)})) \: (V) = G_0^{(r)} (V \oplus 0) = G(V^{(r)} \oplus 0) = (res_0 G)(V^{(r)}) = (res_0G)^{(r)} (V)$. The verification for (\ref{classic}) is analogous, while for (\ref{tensor}) is immediate from the definition of tensor of two functors.
\end{proof}

Being an exact functor,  $res_0$ induces a morphism $\Ext_\Pbold(F,G) \rightarrow \Ext_\Pcal(res_0F, res_0G)$ for all $F,G \in \Pbold$. In the following proposition we make $res_0$ into a morphism between our two spectral sequences. To make no confusion, note by $\pi_r$ the particular restriction $res_0 : \Ebold{r} \rightarrow E_r$. Remark that, as a map of graded spaces, it is the projection on the first $2p^r-1$ degrees.
\begin{prop}\label{reszeross} There exists a morphism of spectral sequences $\boldsymbol{II}_{F,G,r}^{*,*} \rightarrow II_{F,G,r}^{*,*}$ that identifies
	\begin{itemize}
		\item to $G(\pi_r)_* : \Ext_\Pcal^*(F, G_{\Ebold{r}}) \rightarrow \Ext_\Pcal^*(F, G_{E_r})$ on the second pages and
		\item to $res_0$ on the abutments.
	\end{itemize}
\end{prop}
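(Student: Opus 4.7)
The plan is to construct the morphism of spectral sequences at the level of the defining bicomplexes. Fix a projective resolution $P_*$ of $F$ in $\Pcal$, an injective coresolution $J^*$ of $G_0^{(r)}$ in $\Pbold$, and an injective coresolution $K^*$ of $G^{(r)}$ in $\Pcal$. Since $res_0$ is exact and satisfies $res_0(G_0^{(r)}) = G^{(r)}$ by Lemma \ref{reszeroprop}(\ref{classic}), the complex $res_0 J^*$ is a (not necessarily injective) coresolution of $G^{(r)}$; the comparison theorem for injective coresolutions then provides a chain map $\iota : res_0 J^* \to K^*$ lifting $\id_{G^{(r)}}$, unique up to chain homotopy. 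Using also $res_0((P_s)_0^{(r)}) = (P_s)^{(r)}$ from the same lemma, we obtain a morphism of bicomplexes
\[ \Phi : \Hom_\Pbold((P_*)_0^{(r)}, J^*) \xrightarrow{res_0} \Hom_\Pcal((P_*)^{(r)}, res_0 J^*) \xrightarrow{\iota_*} \Hom_\Pcal((P_*)^{(r)}, K^*) \]
and hence the required morphism $\boldsymbol{II}_{F,G,r}^{*,*} \to II_{F,G,r}^{*,*}$ of spectral sequences.

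The identification on the abutments is immediate from the construction: a class in $\Ext^n_\Pbold(F_0^{(r)}, G_0^{(r)})$ represented by a chain map $(P_*)_0^{(r)} \to J^*[n]$ is sent by $\Phi$ to its image under $res_0$ composed with the quasi-isomorphism $\iota$, which is exactly $res_0$ of the original class in $\Ext^n_\Pcal(F^{(r)}, G^{(r)})$. For the second pages, denote by $c' \dashv \rho'$ the classical precomposition--adjoint pair. The adjunctions rewrite
\[ \Hom_\Pbold((P_*)_0^{(r)}, J^*) \simeq \Hom_\Pcal(P_*, \rho J^*), \qquad \Hom_\Pcal((P_*)^{(r)}, K^*) \simeq \Hom_\Pcal(P_*, \rho' K^*), \]
and since right adjoints to exact functors preserve injectives, $\rho J^*$ and $\rho' K^*$ are injective coresolutions computing $\Rbold^*\rho(G_0^{(r)})$ and $\Rbold^*\rho'(G^{(r)})$. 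Taking vertical cohomology shows that $\Phi$ on the $E_1$-page is induced by the canonical natural transformation $\rho \to \rho' \circ res_0$, which in turn arises by adjunction from the identity $res_0 \circ c = c'$. Horizontal cohomology then gives the $E_2$-map, provided one identifies this transformation, via Lemma \ref{isokr} and its classical analogue, with $G(\pi_r) : G_{\Ebold{r}} \to G_{E_r}$.

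The main obstacle is precisely this last identification. It suffices to check it for $G = S^d_V$, where both isomorphisms are built in Proposition \ref{adjointSd} from the cup-product map $\eta_{V,W}$ and its classical analogue. Since $res_0$ is an exact symmetric monoidal functor sending $\Ibold_0^{(r)}$ to $I^{(r)}$, it respects Yoneda and cup products, and on the source tensor factor $(\Ebold{r} \otimes V \otimes W)^{\otimes d}$ it acts slot-by-slot as $\pi_r \otimes \id \otimes \id$. Tracing this compatibility through the three steps of the proof of Proposition \ref{adjointSd}, including the reduction to the Drupieski--Kujawa formula for $\eta_\kk$, yields the identification when $G = S^d_V$; the general case then follows from the Kuhn duality argument of Lemma \ref{isokr}, every step of which is functorial in $G$.
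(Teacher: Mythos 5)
Your proof is correct and follows essentially the same route as the paper: the same bicomplex comparison via a lift $res_0 J^* \to K^*$, the same reduction (using that the second-page identification is natural in $G$ and that injectives cogenerate) to checking the first-page map for $G = S^d_V$ and $P_s = \Gamma^d$, and the same use of the cup-product description together with the monoidality of $res_0$. The only presentational difference is that you phrase the $E_1$-page comparison as induced by the adjunction-unit transformation $\rho \to \rho' \circ res_0$, whereas the paper works directly with the $\Ext$-groups; this is the same mechanism expressed in slightly more abstract language.
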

\begin{proof} We have to manipulate the explicit construction of our spectral sequences, namely the one made in general in the proof of Proposition \ref{appox} and its classical counterpart. Let $J^* \hookleftarrow G_0^{(r)}$ be an injective coresolution and $P_* \twoheadrightarrow F$ be a projective resolution. Then the spectral sequence $\boldsymbol{II}_{F,G,r}^{*,*}$ is induced by the bicomplex\footnote{For convenience, we do not pass immediately to the right adjoint $\rho$ as we did in the proof of Proposition \ref{appox}.} 
%	$\Hom_\Pbold(P_*, J_0^{* (r)})$. 
$\Hom_\Pbold((P_s)_0^{(r)}, J^t)$. We proceed to define our restriction morphism of spectral sequences.
%Take $Q_* \rightarrow F^{(r)}$ a projective resolution in $\Pcal$. Since $res_0(P_*)$ is a (not projective) resolution of $F^{(r)}$, there is a morphism of complexes $Q_* \rightarrow res_0(P_*)$.
Take $K^* \hookleftarrow G^{(r)}$ an injective coresolution in $\Pcal$. Since $res_0(J^*)$ is a (not injective) resolution of $G^{(r)}$, there exists a morphism of complexes $res_0(J^*) \rightarrow K^*$ lifting the identity. Define the restriction morphism at the page zero by the composite 
  \begin{equation}\label{restrbicomplex}
  \Hom_\Pbold((P_*)_0^{(r)}, J^*) \xrightarrow{res_0} \Hom_\Pcal(P_*^{(r)}, res_0(J^*)) \rightarrow \Hom_\Pcal(P_*^{(r)}, K^*) \; . 
  \end{equation}
%Define the restriction morphism at the page zero by the composite 
%%
%\[ \Hom_\Pbold(P_*, J_0^{* (r)}) \xrightarrow{res_0} \Hom_\Pcal(res_0(P_*), J^{* (r)}) \rightarrow \Hom_\Pcal(Q_*, J^{* (r)}) \; .\]
%
%
%
%Take a bicomplex $\boldsymbol{K_r}^{*,*}$, resp. $K_r^{*,*}$, as a Cartan-Eilenberg coresolution of $J_0^{* (r)}$, resp. $J^{(r)}$. Lemma \ref{reszeroprop} and exactitude of $res_0$ imply that $res_0 : (J^*_0)^{(r)} \rightarrow (J^*)^{(r)}$ is a morphism of complexes that lifts $res_0 : G_0^{(r)} \rightarrow G^{(r)}$. Lift it in turn to a morphism of bicomplexes $\beta_r : \boldsymbol{K_r} \rightarrow K_r$. With these choices, the two bicomplexes
%	%
%	\[ \Hom_{\Pbold}(F_0^{(r)}, \boldsymbol{K_r}^{*,*}) \: , \; \; \; \; \Hom_{\Pcal}(F^{(r)}, K_r^{*,*}) \; .  \]
%	%
%	give rise respectively to $\boldsymbol{II}_{F,G,r}$ and $II_{F,G,r}$ as in [ref section], and $s$ It is immediate to identify the abutments as well as the morphism between them as $res_0$. We proceed to identify it between the second pages. The first pages
%
%
%
%
%Now take $G^{(r)} \hookrightarrow K^*$ a projective resolution in $\Pcal$. Since $res_0(J_*)$ is a (not injective) resolution of $G^{(r)}$, there is a morphism of complexes $res_0(J_*) \rightarrow K^*$.
%At the level of cocycles, the composite 
% %
% \[ \Hom_\Pbold((P_*)_0^{(r)}, J^*) \xrightarrow{res_0} \Hom_\Pcal(P_*^{(r)}, res_0(J^*)) \rightarrow \Hom_\Pcal(P_*^{(r)}, K^*) \; \]
%%
%descends to $res_0 : \Ext^*_\Pbold(F_0^{(r)}, J_0^{* (r)}) \rightarrow \Ext^*_\Pcal(F^{(r)}, J^{* (r)}).$
Note that the right-most bicomplex gives rise to $II^{*,*}_{F,G,r}$. The morphism (\ref{restrbicomplex}) identifies to the restriction of extensions between the abutments $\Ext^*_\Pbold(F_0^{(r)}, G_0^{(r)})\rightarrow \Ext^*_\Pcal(F^{(r)}, G^{(r)})$ and between the first pages 
\begin{equation}\label{restrappoggio}
\Ext^t_\Pbold((P_s)_0^{(r)}, G_0^{(r)})\rightarrow \Ext^t_\Pcal((P_s)^{(r)}, G^{(r)}) \: .
\end{equation}
Now, by (\ref{appiso}) there is an isomorphism
\begin{equation}\label{eheh}
\Ext^*_\Pbold((P_s)_0^{(r)}, G_0^{(r)}) \simeq \Hom_\Pcal(P_s, G_{\Ebold{r}}) 
\end{equation}
by which we compute the second page. In the classical case, there is an analogue isomorphism $\Ext^*_\Pcal(P_s^{(r)}, G^{(r)}) \simeq \Hom_\Pcal(P_s, G_{E_{r}})$. Hence, to conclude we have to verify that (\ref{restrappoggio}) identifies via (\ref{eheh}) to the push-forward $G(\pi_r)_* : \Hom_\Pcal(P_s, (G_{\Ebold{r}})^t) \rightarrow \Hom_\Pcal(P_s, (G_{E_r})^t)$. It will suffice to do that for $G$ injective, so by Corollary \ref{exponentialext} we can suppose $P_s:=\Gamma^d$ and $G:=S^d_V$. In that case, the isomorphism $S^d(V \otimes \Ebold{r}) \simeq \Hom_\Pcal(\Gamma^d, (S^d_V)_{\Ebold{r}})\simeq \Ext^*_\Pbold(\Gamma_0^{d(r)}, (S^d_V)_0^{(r)})$, as well as its classical analogue, is given explicitly by the cup product $e_1 \cdot \dots \cdot e_d \longmapsto e_1 \cup \dots \cup e_d$ (we push elements of $V$ inside the extensions of $\Ebold{r}$). Lemma \ref{reszeroprop}(\ref{tensor}) implies that $res_0$ commutes with the cup product, hence the restriction morphism on the first pages identifies to $S^d_V(\Ebold{r}) \rightarrow S^d_V(E_r)$ induced by $\pi_r$. 
\end{proof}
The restriction morphism gives us a good tool to compare the two sequences. Since $II_{F,G,r}^{*,*}$ collapses at the second page, we have that $G(\pi_r)_* \circ \delta = 0$ for any differential $\delta$ at any page of $\boldsymbol{II}_{F,G,r}^{*,*}$. What we want to study is then the kernel of $G(\pi_r)_*$. Let us make some definitions. Consider the set of \emph{infinite} compositions of the integer $d$:
\[ \Lambda(\infty,d) = \{\lambda=(\lambda_i)_{i \in \mathbb{N}} \mid \lambda_i \geq 0, \: \sum_{i \geq 0} \lambda_i = d \}  \]
and define the weight of $\lambda$ to be $|\lambda| := \sum_{i \geq 0} 2 i \lambda_i$ (this makes sense because $\lambda$ can only have finitely many nonzero components). In this sense, there is a graded decomposition 
\[G_{\Ebold{r}} = \bigoplus_{\lambda \in \Lambda(\infty,d)} G^\lambda \: \: .\]
Now say that $\lambda$ is \emph{$n$-bounded} if it is nonzero only in components $\lambda_i$ for $i < n$. Then
\[G(\pi_r)\vert_{G^\lambda} = \begin{cases*}
\text{isomorphism on the image} & if $\lambda$ is $p^r$-bounded \\
0 & otherwise 
\end{cases*}  \]
thus the same holds for the push forward $G(\pi_r)_*$. The property of being bounded is partially controlled by the weight:
\begin{lemma}\label{padrelivio}
	If $|\lambda|<2n$, then $\lambda$ is $n$-bounded. 
\end{lemma}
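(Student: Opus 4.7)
The plan is to argue by contraposition, exploiting the non-negativity of every term appearing in the definition of the weight. Specifically, I would show that if $\lambda$ fails to be $n$-bounded, then $|\lambda| \geq 2n$, which contradicts the hypothesis.

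Concretely, suppose $\lambda$ is not $n$-bounded. By definition of $n$-boundedness, there exists an index $i_0 \geq n$ such that $\lambda_{i_0} \geq 1$ (recall the $\lambda_i$ are non-negative integers). Now, since every term $2j\lambda_j$ in the sum $|\lambda| = \sum_{j \geq 0} 2j \lambda_j$ is non-negative, dropping all terms except the one at $j = i_0$ yields
\[
|\lambda| \;\geq\; 2 i_0 \lambda_{i_0} \;\geq\; 2n \cdot 1 \;=\; 2n.
\]
This contradicts the assumption $|\lambda| < 2n$, so $\lambda$ must be $n$-bounded.

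There is no real obstacle: the lemma is a purely combinatorial threshold statement, and the weight function $|\lambda| = \sum_i 2i \lambda_i$ was set up precisely so that a bound on $|\lambda|$ forces $\lambda$ to be supported in small indices. The only care to take is the remark that the sum is well-defined (only finitely many $\lambda_i$ are non-zero, as noted in the paragraph preceding the lemma), which makes the inequality manipulation unambiguous.
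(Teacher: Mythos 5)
Your proof is correct and is essentially the same argument as the paper's, which observes that the non-$n$-bounded composition of minimal weight is $(d-1,0,\dots,0,1,0,\dots)$ (with the $1$ in position $n$), whose weight is exactly $2n$. You simply phrase the same threshold computation as a direct inequality by contraposition rather than by exhibiting the extremal composition.
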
 
\begin{proof}
	The non-$n$-bounded composition with minimal weight is ($d-1, 0, \dots,0, 1, 0, \dots)$ with $1$ in place $n$. Its weight is $2n$, which proves the assertion.
\end{proof}

\begin{cor}\label{ciolino}
	If $t<2p^r$, the restriction of $res_0 : (G_{\Ebold{r}})^t \rightarrow (G_{E_r})^t$ is an isomorphism.
\end{cor}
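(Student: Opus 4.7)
The plan is to pass to the weight decomposition $G_{\Ebold{r}} = \bigoplus_{\lambda \in \Lambda(\infty,d)} G^\lambda$ introduced just before the statement and read off the corollary from the case analysis of $G(\pi_r)|_{G^\lambda}$ combined with Lemma \ref{padrelivio}.

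Concretely, I would first write
\[ (G_{\Ebold{r}})^t = \bigoplus_{|\lambda|=t} G^\lambda . \]
By Lemma \ref{padrelivio} applied with $n = p^r$, the hypothesis $t < 2p^r$ forces every $\lambda$ appearing in this sum to be $p^r$-bounded. Next I would set up the analogous weight decomposition of $(G_{E_r})^t$ and identify its indexing set with the same collection of $p^r$-bounded compositions: since $\pi_r \colon \Ebold{r} \to E_r$ is an honest isomorphism in the first $2p^r-1$ degrees and $E_r$ has no contribution beyond, the weight-$\lambda$ summand of $G_{E_r}$ survives exactly when $\lambda$ is $p^r$-bounded, and is canonically $G(\pi_r)(G^\lambda)$. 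Combining this with the case analysis immediately preceding Lemma \ref{padrelivio}, the map $res_0$ in degree $t$ becomes a direct sum of isomorphisms $G^\lambda \xrightarrow{\sim} G^\lambda_{E_r}$ indexed by $p^r$-bounded $\lambda$ of weight $t$, and is therefore itself an isomorphism.

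I do not anticipate a serious obstacle; the argument is essentially bookkeeping that repackages material already in place. The one point worth checking carefully is that, for a $p^r$-bounded $\lambda$, the image of $G(\pi_r)|_{G^\lambda}$ fills the entire corresponding summand of $(G_{E_r})^t$ rather than a proper subspace. This reduces to the fact that $\pi_r$ is a genuine degreewise isomorphism on each of the finitely many cohomological degrees visited by such a $\lambda$, which then functorially lifts through $G$ to an isomorphism on the whole $\lambda$-summand.
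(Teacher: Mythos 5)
Your proposal is correct and follows essentially the same route as the paper: decompose $(G_{\Ebold{r}})^t$ by weight, use Lemma \ref{padrelivio} to see every $\lambda$ of weight $t<2p^r$ is $p^r$-bounded, and then invoke the case analysis on $G(\pi_r)|_{G^\lambda}$ together with the observation that the image of the $p^r$-bounded summands is exactly $(G_{E_r})^t$.
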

\begin{proof}
	$(G_{\Ebold{r}})^t = \bigoplus_{|\lambda|=t}G^\lambda$, so the hypothesis implies that all the compositions $\lambda$ in the sum have weight $< 2p^r$. By Lemma \ref{padrelivio} they are all $p^r$-bounded, hence the restriction of $res_0$ is an isomorphism on its image. But the image is the sum of the $G^{\lambda}$ with $\lambda$ $p^r$-bounded, which coincides with $(G_{E_r})^t.$
\end{proof}

Instead of applying this directly on the spectral sequence, we make a more optimal choice on the parameters $F,G,r$. Fix definitively $F,G$ and $r \geq 1$ till the end of the section and set:
\[ \boldsymbol{II}^{s,t} := \boldsymbol{II}_{F^{(r-1)},G^{(r-1)},1}^{s,t} = \Ext_\Pcal ^s (F^{(r-1)}, ((G^{(r-1)})_{\Ebold{1}} )^t) \Rightarrow \Ext^{s+t}_\Pbold(F_0^{(r)}, G_0^{(r)})   \]
\[
II^{s,t} := II_{F^{(r-1)},G^{(r-1)},1}^{s,t} = \Ext_\Pcal ^s (F^{(r-1)}, ((G^{(r-1)})_{E_1} )^t) \Rightarrow \Ext^{s+t}_\Pcal(F^{(r)}, G^{(r)}) \: .
\]
Since $(G^{(r-1)})_{\Ebold{1}} = (G_{\Ebold{1}^{(r-1)}})^{(r-1)}$, it decomposes as the sum of $G^\lambda$, $\lambda \in \Lambda(\infty,d)$, where the weight of $G^\lambda$ is $||\lambda|| := 2p^{2(r-1)}\sum_i i \lambda_i$. 
In total analogy with Lemma \ref{padrelivio} we have the following criterion:
\begin{lemma}\label{ooooh}
	If $||\lambda|| < 2p^{2r-1}$, then $\lambda$ is $p$-bounded. 
\end{lemma}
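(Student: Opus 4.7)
The plan is to mirror the argument of Lemma \ref{padrelivio}: identify the non-$p$-bounded composition of minimal weight and compute its weight explicitly. A composition $\lambda$ fails to be $p$-bounded exactly when some $\lambda_i$ with $i \geq p$ is nonzero. Among all such $\lambda$, the one minimizing $||\lambda||$ should be $(d-1,0,\dots,0,1,0,\dots)$ with the $1$ in position $p$, because every contribution $2ip^{2(r-1)}\lambda_i$ is nonnegative and we gain the least by placing a single $1$ in the leftmost forbidden slot (position $p$). For this composition, $||\lambda|| = 2p^{2(r-1)} \cdot p = 2p^{2r-1}$.

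From this I would conclude by contraposition: if $\lambda$ is not $p$-bounded, then some $i \geq p$ has $\lambda_i \geq 1$, so $\sum_i i\lambda_i \geq p$ and $||\lambda|| \geq 2p^{2r-1}$. Equivalently, $||\lambda|| < 2p^{2r-1}$ forces $\lambda_i = 0$ for all $i \geq p$. There is no real obstacle here — the only subtlety is keeping track of the extra factor $p^{2(r-1)}$ coming from the fact that we are now twisting by $\Ebold{1}^{(r-1)}$ rather than by $\Ebold{r}$, which is precisely what turns the bound $2p^r$ of Lemma \ref{padrelivio} into $2p^{2r-1}$.
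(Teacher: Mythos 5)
Your proof is correct and follows exactly the route the paper intends: the paper simply says Lemma~\ref{ooooh} is ``in total analogy with Lemma~\ref{padrelivio},'' and the intended argument is precisely your minimal-weight-witness computation (or its contrapositive), with the factor $p^{2(r-1)}$ in $\lVert\lambda\rVert$ turning the bound $2p$ into $2p^{2r-1}$.
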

% 
%\begin{cor}\label{zerostripe}
%	All differentials in $\boldsymbol{II}^{s,t}$ departing from the horizontal stripe $\{t~< ~2p^{2r-1}\}$ are zero.
%\end{cor}
We are now ready to prove the theorem that we announced at the beginning of the section.

\begin{proof}[Proof of Theorem \ref{isobasdegres}]
	By Lemma \ref{ooooh} it follows that the morphism $G^{(r-1)}(\pi_1)_* : \Ext^*_\Pcal(F^{(r-1)}, ((G^{(r-1)})_{\Ebold{1}})^t) \rightarrow \Ext^*_\Pcal(F^{(r-1)}, ((G^{(r-1)})_{E_1})^t)$ is an isomorphism for all $t < 2p^{2r-1}$. By Proposition \ref{reszeross} it is a spectral sequence morphism which identifies to $res_0 : \Ext_\Pbold^*(F_0^{(r)}, G_0^{(r)}) \rightarrow \Ext_\Pcal^*(F^{(r)}, G^{(r)})$ between the abutments, so the latter is also an isomorphism in degrees $* < 2p^{2r-1}$. Consequently, in such degrees we have a chain of natural isomorphisms
	\[ \Ext_\Pbold^*(F_0^{(r)}, G_0^{(r)}) \simeq \Ext_\Pcal^*(F^{(r)}, G^{(r)}) \simeq \Ext_\Pcal^*(F^{(r-1)}, (G^{(r-1)})_{E_1}) \simeq \Ext_\Pcal^*(F^{(r-1)}, (G^{(r-1)})_{\Ebold{1}}) \]
	where the second one comes from Theorem \ref{classicaliso} and the last one is $G(\pi_1)_*$. Since $(G^{(r-1)})_{\Ebold{1}} = (G_{\Ebold{1}^{(r-1)}})^{(r-1)}$, applying again Theorem \ref{classicaliso} one gets
	\[ \Ext_\Pcal^*(F^{(r-1)}, (G^{(r-1)})_{\Ebold{1}}) = \Ext_\Pcal^*(F, G_{\Ebold{1}^{(r-1)}\otimes E_{r-1}}) \; . \]
	It is straightforward to verify that $\Ebold{1}^{(r-1)}\otimes E_{r-1} \simeq \Ebold{r}$, which ends the proof.
\end{proof}

Define the generic cohomology in the two categories:
\[ H_\Pcal^*(F,G) := colim_{r\geq 1} \: \Ext^*_\Pcal(F^{(r)}, G^{(r)}) \]
\[ H_\Pbold^*(F,G) := colim_{r\geq 1} \: \Ext^*_\Pbold(F_0^{(r)}, G_0^{(r)}) \]
the diagrams being taken along with the respective twisting maps between $\Ext$-groups. 

\begin{cor}
	For all $F,G \in \Pcal$ there is a natural isomorphism
	\[ H_\Pbold^*(F,G) \simeq H_\Pcal^*(F,G) \: .\]
\end{cor}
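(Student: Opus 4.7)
The plan is to view the corollary as a direct consequence of Theorem \ref{isobasdegres} after passing to the colimit, with the restriction morphism $res_0$ providing the comparison map. The key observation is that the natural isomorphism of Theorem \ref{isobasdegres} is actually \emph{realised} by $res_0$: indeed, in the chain of natural isomorphisms at the end of its proof, the first map $\Ext_\Pbold^*(F_0^{(r)}, G_0^{(r)}) \simeq \Ext_\Pcal^*(F^{(r)}, G^{(r)})$ is precisely $res_0$ in degrees $* < 2p^{2r-1}$.

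First I would check that $res_0$ assembles into a morphism of directed systems, i.e.\ that it commutes with the twisting maps used to form the colimits. The super twisting map sends an extension to its precomposition with $\Ibold_0^{(1)}$, while the classical twisting map precomposes with $I^{(1)}$. Lemma \ref{reszeroprop}(\ref{super})--(\ref{classic}) gives $res_0 \circ (-)_0^{(1)} = (-)^{(1)} \circ res_0$, and since $res_0$ is exact, this identity lifts to extensions, yielding the required compatibility.

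Next, I would fix a cohomological degree $n$ and choose $r_0$ large enough that $n < 2p^{2r_0 - 1}$. For every $r \geq r_0$ the same inequality holds, so by Theorem \ref{isobasdegres} the restriction
\[ res_0 : \Ext_\Pbold^n(F_0^{(r)}, G_0^{(r)}) \longrightarrow \Ext_\Pcal^n(F^{(r)}, G^{(r)}) \]
is an isomorphism. The tail $\{r \geq r_0\}$ is cofinal in $\{r \geq 1\}$, so computing the colimits along this cofinal tail shows that $res_0$ induces an isomorphism $H_\Pbold^n(F,G) \simeq H_\Pcal^n(F,G)$.

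Since $n$ was arbitrary and $res_0$ is natural in $F$ and $G$, the resulting graded isomorphism on generic cohomologies is natural, which proves the corollary. I do not anticipate a real obstacle: the only subtle point is the commutation of $res_0$ with the twisting maps, but this reduces immediately to Lemma \ref{reszeroprop}, and the rest is a standard cofinality argument.
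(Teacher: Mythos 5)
Your proposal is correct and follows essentially the same route as the paper: establish that $res_0$ realises the isomorphism of Theorem \ref{isobasdegres}, check that $res_0$ commutes with the twisting maps via Lemma \ref{reszeroprop}, and then pass to the colimit by a cofinality argument in each fixed degree. The paper is slightly more terse (it invokes Theorem \ref{isobasdegres} together with Corollary \ref{ciolino} to get the weaker but sufficient bound $t < 2p^r$, and compresses the cofinality step into the phrase ``implies the assertion on the colimits''), but the underlying argument is identical.
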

\begin{proof}
	By Theorem \ref{isobasdegres} and Corollary \ref{ciolino} it follows that $res_0$ induces an isomorphism $\Ext_\Pbold^t(F_0^{(r)}, G_0^{(r)}) \simeq \Ext_\Pcal^t(F^{(r)}, G^{(r)})$ for all $t < 2p^r$. This isomorphism implies the assertion on the colimits, provided that we show that $res_0$ commutes with the twisting maps. That means, for all $r\geq 1$ the diagram
	\[
	\begin{tikzcd}
	\Ext_\Pbold^*(F_0^{(r)}, G_0^{(r)}) \arrow{r}{res_0} \arrow{d}{-_0^{(1)}} & \Ext_\Pcal^*(F^{(r)}, G^{(r)} \arrow{d}{-^{(1)}}) \\
	\Ext_\Pbold^*(F_0^{(r+1)}, G_0^{(r+1)}) \arrow{r}{res_0} & \Ext_\Pcal^*(F^{(r+1)}, G^{(r+1)})
	\end{tikzcd}
	\]
	must commute. This is an immediate consequence of Lemma \ref{reszeroprop}(\ref{super}).
\end{proof}
\bibliographystyle{plain}
\bibliography{biblio.bib}
\end{document}